\documentclass[10pt]{article}

\pdftrailerid{}

\usepackage[utf8]{inputenc}
\usepackage[T1]{fontenc}
\usepackage{amsmath,amssymb,dsfont}
\numberwithin{equation}{section}
\usepackage{microtype}
\usepackage{graphicx,tikz,pgfplots}
\graphicspath{{images/}}
\pgfplotsset{compat=newest}
\usepackage{amsmath}      
\usepackage{subcaption}   
\usepackage{subcaption}
\usepackage{aliascnt}
\usepackage[a4paper,centering,bindingoffset=0cm,marginpar=2cm,margin=2.5cm]{geometry}
\usepackage[pagestyles]{titlesec}
\usepackage[font=footnotesize,format=plain,labelfont=sc,textfont=sl,width=0.75\textwidth,labelsep=period]{caption}
\usepackage{enumitem}
\usepackage{comment}
\usepackage{mathtools}
\usepackage{bigints}
\usepackage{multirow}
\usepackage{makecell}
\usepackage[ruled,vlined]{algorithm2e}
\usepackage{float}
\usepackage{multirow}
\usepackage{amsthm}       
\usepackage{thmtools}     

\usepackage[backend=biber,maxnames=10,backref=true,hyperref=true,giveninits=true,safeinputenc]{biblatex}
\DefineBibliographyStrings{english}{%
	backrefpage = {cited on page},
	backrefpages = {cited on pages},
}

\DeclareFieldFormat[report]{title}{``#1''}
\DeclareFieldFormat[book]{title}{``#1''}
\AtEveryBibitem{\clearfield{url}}
\AtEveryBibitem{\clearfield{note}}

\titleformat{\section}[block]{\large\sc\filcenter}{\thesection.}{0.5ex}{}[]
\titleformat{\subsection}[runin]{\bf}{\thesubsection.}{0.5ex}{}[.]

\usepackage{hyperref}     
\hypersetup
{
    bookmarksnumbered,
    breaklinks,
    pdfborderstyle=,
    colorlinks,
    citecolor=[rgb]{0,.65,0},
    linkcolor=[rgb]{0,0,.5},
    urlcolor=[rgb]{0,0,.5},
    pdfauthor={Authors},
    pdfsubject={Subject},
    pdftitle={Title},
    pdfkeywords={Keywords}
}

\newpagestyle{headers}
{
	\headrule
	\sethead[\footnotesize\thepage][\footnotesize\sc Authors][]{}{\footnotesize\sc Title}{\footnotesize\thepage}
	\setfoot{}{}{}
}
\declaretheorem[numberwithin=section,name=Theorem]{theorem}

\declaretheorem[sibling=theorem]{lemma}

\declaretheorem[sibling=theorem,style=definition]{definition}

\declaretheorem[style=remark,sibling=theorem]{remark}
\declaretheorem[style=remark,sibling=theorem]{example}

\newcommand{\N}{\mathds{N}}

\newcommand{\R}{\mathds{R}}

\let\RE\Re
\let\Re=\undefined
\DeclareMathOperator{\Re}{\RE e}
\let\IM\Im
\let\Im=\undefined
\DeclareMathOperator{\Im}{\IM m}

\newcommand{\abs}[1]{\left|#1\right|}

\newcommand{\set}[1]{\left\{#1\right\}}

\newcommand{\e}{\mathrm e}
\let\ii\i
\renewcommand{\i}{\mathrm i}

\newcommand{\ve}{\varepsilon}

\title{Neural network parametrized level sets for image segmentation}
\author{Otmar Scherzer$^{1,2,3}$\\{\footnotesize\href{mailto:otmar.scherzer@univie.ac.at}{otmar.scherzer@univie.ac.at}}
\and Cong Shi$^{1,2}$\\{\footnotesize\href{mailto:cong.shi@univie.ac.at}{cong.shi@univie.ac.at}}
\and Thi Lan Nhi Vu$^{1,3}$\\{\footnotesize\href{mailto:thi.lan.nhi.vu@univie.ac.at}{thi.lan.nhi.vu@univie.ac.at}}}

\date{}

\newcommand{\vx}{{\vec{x}}}

\newcommand{\vw}{\vec{w}}

\newcommand{\levelfun}{{\tt l}} 
\newcommand{\multiphase}{{\tt m}}
\newcommand{\affine}{{\tt a}} 
\newcommand{\network}{{\tt s}} 
\newcommand{\nettwo}{{\tt t}} 
\newcommand{\nnset}{{\tt S}} 
\newcommand{\nntwo}{{\tt T}} 
\newcommand{\mult}{\imath}

\newcommand{\f}{{\tt f}}

\begin{document}

\maketitle
\thispagestyle{empty}
\begin{center}
\hspace*{5em}
\parbox[t]{12em}{\footnotesize
$^1$Faculty of Mathematics\\
University of Vienna\\
Oskar-Morgenstern-Platz 1\\
A-1090 Vienna, Austria}
\hfil
\parbox[t]{17em}{\footnotesize
\hspace*{-1ex}$^2$Johann Radon Institute for Computational\\
and Applied Mathematics (RICAM)\\
Altenbergerstraße 69\\
A-4040 Linz, Austria}
\end{center}

\begin{center}
\parbox[t]{19em}{\footnotesize
\hspace*{-1ex}$^3$Christian Doppler Laboratory\\
for Mathematical Modeling and Simulation\\
of Next Generations of Ultrasound Devices (MaMSi)\\
Oskar-Morgenstern-Platz 1\\
A-1090 Vienna, Austria}
\end{center}

\let\oldinclude\include
\renewcommand{\include}[1]{\input{#1}}

\begin{abstract}
Chan-Vese algorithms have proven to be a first-class method for image segmentation. Early implementations used level set methods with a pixelwise representation of the level set function. Later, parametrized level set approximations, such as splines, have been studied and computationally developed to improve efficiency. In this paper, we use neural networks as parametrized approximations of level set functions for implementing the Chan-Vese methods. We show that this approach is efficient because of the equivalence between two layer neural networks and polygonal approximations of level set-based segmentations. In turn, this allows the two-layer network architecture to be interpreted as an ansatz function for the approximate minimization of Chan-Vese functionals.
Based on these theory, we extend the classical Chan-Vese algorithm to a data-driven setting, where prior parameters of the network are obtained through unsupervised training on representative image data. These learned parameters encode geometric structures of the data, leading to improved initialization and faster convergence of the Chan–Vese image segmentation.
\end{abstract}

\section{Introduction} \label{sec:intro}
\emph{Image segmentation} aims to partition an image into regions of desired properties. Accurate segmentation plays a crucial role in many areas of applications such as object recognition, medical diagnosis, autonomous driving, and scene understanding, to name but a few.

Basis for this work are the variational Chan-Vese segmentation algorithms \cite{ChaVes01,VesCha02}: In the older paper Chan \& Vese segmented a \emph{single} object, while in the newer they considered segmentation of \emph{multiple} objects. These methods compute piecewise constant functions that minimize a Mumford-Shah like energy functional. The regions of constant intensity of the minimizer represent segments of desired properties. The theoretical foundations of the Chan-Vese model date back to the seminal work of Mumford \& Shah \cite{MumSha89b} and Morel \& Solimini \cite{MorSol95b}. In computational practice, the Chan-Vese functionals are often minimized via a level set approach, where the segments are represented by the level sets of a high dimensional function. Level set methods, originally introduced by Osher \& Sethian \cite{OshSet88}, have proven a powerful approach to solve image segmentation tasks, because the implicit representation of segments allows to identify objects of complex topologies, such as, for instance, segments consisting of multiple connected domains. Building upon this foundational framework, Chung \& Vese \cite{ChuVes05,ChuVes09} extended the Chan-Vese model to a multiphase level set framework capable of simultaneously segmenting multiple objects. Brox \& Weickert \cite{BroWei04} proposed a variational minimization strategy within a level set framework that robustly optimizes both the number of desired segments. Kang et al \cite{KanSanYip11,KanMar14,KanShaSte14} have complemented level set methods which allow for automatic determination of the number of objects.
\cite{KanMar14} proposed models that provide accurate approximations of curve length, where total variation regularization was shown to be the most numerically stable for noisy images. \cite{KanSanYip11} proposed a data driven approach by adapting $k$-means clustering to perform scale segmentation, allowing the method to distinguish objects of different scales rather than different intensities as in the classical setting. In this framework, the number of objects is automatically determined. \cite{KanShaSte14} proposed a supervised data driven approach in which segmentation is learned from labeled points and subsequently applied to the remaining parts of the image or to other similar images.

There exist two categories of level set algorithms (see \autoref{fig:rela}):
\begin{description}
	\item{\emph{Unparametrized}} level set algorithms consider the input images and the level set functions, from which segments and classes are detected, pixel wise. All functions are represented as a vector of pixel values.
	\item{\emph{Parametrized}} algorithms (see, for instance, \cite{YanFucJueSch06,FeiFucJueSchYan08,AghKilMil11,OzsKilStuSaiMil25}) use ansatz-functions (such as splines or finite elements) to represent the level sets and input images. This means, in particular, that the segments and classes are specified \emph{double implicitly}: as the zero level sets of the representing functions (first), which in turn are represented via the parametrization (second).
\end{description}
The different concepts can be compared with \emph{finite difference} and \emph{finite element} methods for solving differential equations: The latter represents the solution of the differential equation implicitly by finite element parametrizations, whereas in finite difference methods the function values of the solution are given as a vector associated with solution values on the grid points. In fact, \emph{hybrid} level set algorithms combining the advantages of both parametrized and unparametrized strategies have also been developed (see \cite{FucJueSchYan09b}).
\begin{figure}[H]
	\centering
	\includegraphics[width=0.5\linewidth]{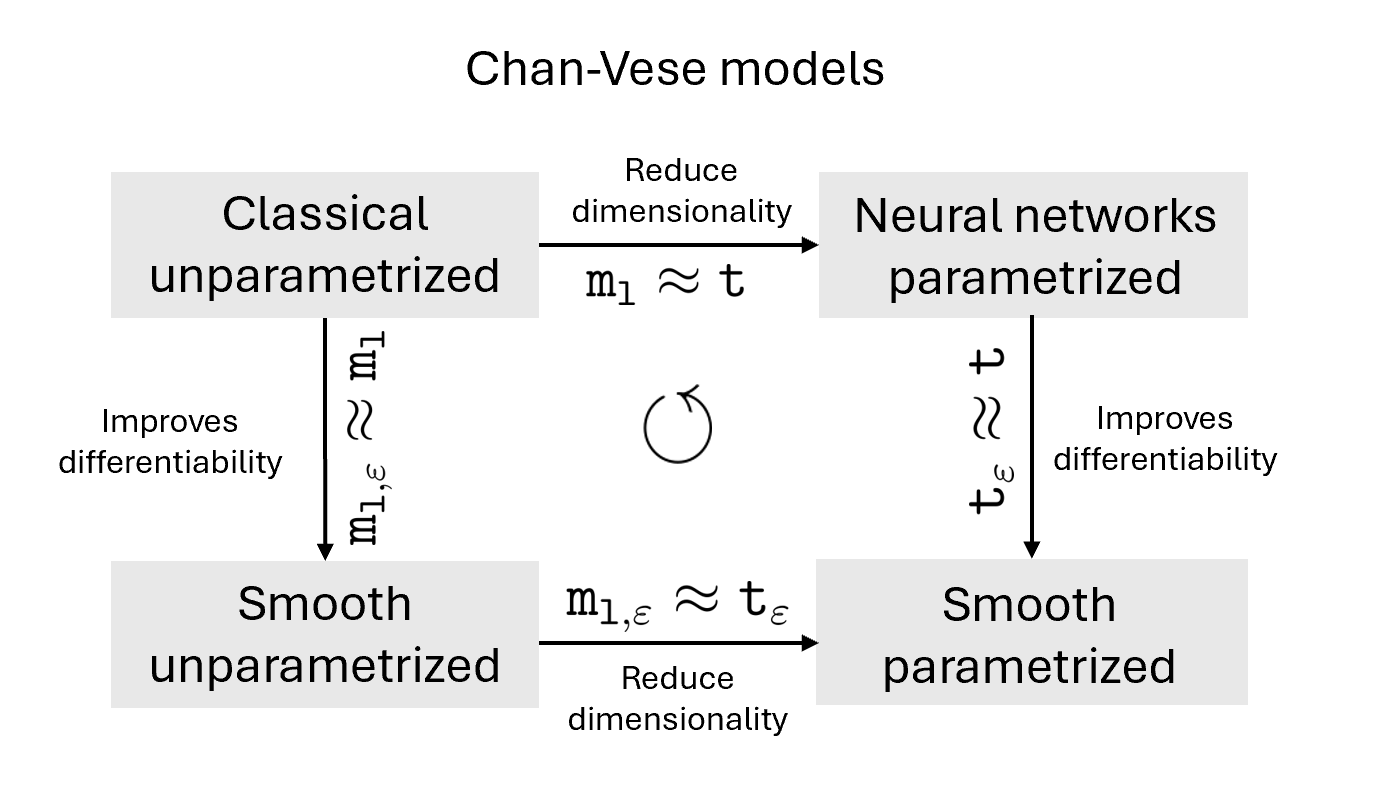}
	\caption{Commutative diagram illustrating the relationships among the classical, parametrized, and smooth Chan-Vese models. The classical (unparametrized) model can be approximated by the parametrized non smooth model, which in turn can be approximated by the smooth variant. Moreover, the smooth parametrized model can also approximate the smooth unparametrized model, which itself provides a smooth approximation of the classical model. The explicit formulations of all Chan–Vese models are given in \autoref{Chan_Vese}, and the corresponding abbreviations are summarized in \autoref{tab:notation}.
    }\label{fig:rela}
\end{figure}

The achievements of this paper are as follows:
\begin{enumerate}
\item We propose a \emph{parametrized} Chan-Vese method for image segmentation and multiple object segmentation, where we use \emph{deep neural networks} for parametrization of the level set functions.
\item The efficiency of neural networks is unquestioned nowadays, but there remains a lack of interpretability regarding why this is the case and how they should be parametrized (this means, which networks one should implement)? We give illustrative examples that demonstrates why (deep) two hidden layer networks efficiently represent segments (see \autoref{LS_NN}) defined from level sets. This result puts the mathematical foundations of level set methods and neural networks in perspective (see \autoref{Chan_Vese}).
\item In contrast to previously documented implementations of Chan-Vese algorithms, the one presented here is \emph{data driven} (see \autoref{subsec:alg} and \autoref{numerics}). In fact, we estimate the \emph{appropriate} neural network structure of a two hidden layer network (which we identified in the previous sections as most adequate): In particular we identify in a data driven manner the adequate number of neurons and prior weights of the neural network parameters.
\end{enumerate}
For the readers convenience we summarize the notation, which is used throughout the paper:
\begin{table}[H]
	\caption{Used notation throughout the paper}
	\centering%
	\begin{tabular}{|c|c|c|}
\hline
\thead{Name} & \thead{Notation} & \thead{Reference} \\  \hline
\thead{level set function}          & $\levelfun$                         & \eqref{eq:GammaI}              \\ \hline
\thead{Heaviside and sigmoid activation function}          & $\sigma$, $\sigma_\ve$                 & \eqref{eq:heavi}              \\ \hline
\thead{Affine linear function}        & $\affine$                           & \eqref{eq:ell}                 \\ \hline
\hline
\thead{One layer (shallow) Heaviside and sigmoid neural network}        & $\network$, $\network_\ve$ 	                & \eqref{eq:n}, \autoref{sec:approx_}                   \\ \hline
\thead{Set of one layer  Heaviside and sigmoid networks}     & $\nnset$, $\nnset_\ve$                            & \eqref{eq:N}, \autoref{sec:approx_}                    \\ \hline
\thead{Two layer Heaviside and sigmoid neural network}           & $\nettwo$, $\nettwo_\ve$                          & \eqref{eq:t}, \autoref{sec:approx_}  \\ \hline
\thead{Set of two layer  Heaviside and sigmoid networks}     & $\nntwo$, $\nntwo_\ve$                        & \eqref{eq:T}, \autoref{sec:approx_}           \\ \hline
\thead{Number of neurons in the first layer}  & $n_1$                      & \eqref{eq:n},  \eqref{eq:t}          \\ \hline
\thead{Number of neurons in the second layer} & $n_2$                      & \eqref{eq:t}   \\ \hline
\hline
\thead{Number of level lines or level sets}         & $m$                                 & \eqref{eq:GammaI0}        \\ \hline
\thead{Piecewise constant function} & $\multiphase$  & \eqref{eq:CV_piecewise} \\ \hline
\thead{Non smooth and smooth multiphase level set function} & $\multiphase_\levelfun$, $\multiphase_{\levelfun,\ve}$  & \eqref{eq:multiphaselevelset}, \autoref{sec:approx_}   \\ \hline
\thead{Non smooth and smooth multiphase function for polygons} & $\multiphase_\network$, $\multiphase_{\network,\ve}$   & \eqref{eq:para}, \autoref{sec:approx_}   \\ \hline
\hline
\thead{Multi-indices of length $m$ with entries in $\{-1,1\}$}       & $\mult$                      & \eqref{eq:multiphaselevelset}, \eqref{eq:GammaI}, \eqref{eq:GammaP}    \\ \hline
\thead{Disjoint regions to be segmented forming a partition of $\Omega$}       & $S_\mult$                      & \eqref{eq:GammaI}             \\\hline
\thead{Polygonal partition of $\Omega$ approximating regions to be segmented}       & $P_\mult$                      & \eqref{eq:GammaP}             \\\hline \hline
\thead{Unparametrized Chan-Vese functional for sets}                & $\mathrm{CV}_{\tt f}$  & \eqref{eq:CV_LS} \\ \hline
\thead{Non smooth and smooth unparametrized}                & \multirow{2}{*}{$\mathrm{CV}^{\mathrm{LS}}_{\tt f}$, $\mathrm{CV}^{\mathrm{LS}}_{\tt f, \ve}$}  & \multirow{2}{*}{\eqref{eq:CV_LS}, \autoref{sec:approx_}} \\
\thead{Chan-Vese functional for multiphase level sets}    &   &   \\ \hline
\thead{Non smooth and smooth parametrized}                  &  \multirow{2}{*}{$\mathrm{CV}^{\mathrm{LS}}_{\tt f}$, $\mathrm{CV}^{\mathrm{LS}}_{\tt f, \ve}$}  & \multirow{2}{*}{\eqref{eq:paraChanVese_l}, \autoref{sec:approx_}} \\
\thead{Chan-Vese functional for multiphase level sets}    &   &   \\ \hline
\end{tabular}\label{tab:notation}
\end{table}

In the following, we start with an example motivating the efficiency of neural networks for segmentation. We then provide an overview of Chan-Vese segmentation models. Throughout this paper, we constrain ourselves to an image domain $\Omega \subseteq \R^2$, meaning that the application we have in mind is multiple object segmentation. Without explicitly stating, we always assume that $\Omega$ is bounded, satisfies the cone property (see \cite{Ada75}), and has piecewise $C^1$-boundary.

\section{Heaviside and sigmoid networks} \label{LS_NN}
In this section, we define Heaviside and sigmoid networks:
\begin{definition}[Heaviside and sigmoid networks] \label{network}
Let $\ve > 0$. The functions
	\begin{equation} \label{eq:heavi}
		\begin{aligned}
			\sigma : \R \to \R\,, \quad
			x \mapsto \left\{ \begin{array}{rcl}
				1 & \text{ for } & x >0\\
				\frac{1}{2} & \text{ for } & x=0\\
				0 & \text{ for } & x <0
			\end{array} \right.	
		\end{aligned} \quad \quad \text{ and }  \quad \quad \begin{aligned}
			\sigma_\ve : \R \to \R\,, \quad
			x \mapsto \frac{1}{1+\e^{-\frac{x}{\ve}}},
		\end{aligned}
	\end{equation}
are called Heaviside  and sigmoid functions, respectively.
	\begin{itemize}
		\item A \emph{Heaviside neuron},
		\begin{equation} \label{eq:HN}
		    N = \sigma \circ {\tt a}\,,
		\end{equation}
		is a composition of an Heaviside and an \emph{affine linear} function
		\begin{equation}\label{eq:ell}
			\begin{aligned}
				{\tt a} : \R^2 \to \R\,, \quad 	\vx \mapsto \vw^T\vx + b,
			\end{aligned}
		\end{equation}
		which is parameterized by $\vw \in \R^2$, $b \in \R$.
		\item A \emph{one layer Heaviside neural network} (often also called \emph{shallow} neural network) (see \autoref{fig:onelayer}) is a linear combination of $n_1$ Heaviside neurons $N_j$, $j=1,\ldots,n_1$:
		\begin{equation}\label{eq:n}
			\vx \in \R^2 \mapsto \network(\vx) := \network[\vec{a},{\bf W},\vec{b}](\vx) := \sum_{j=1}^{n_1} a_j N_j(\vx) = \sum_{j=1}^{n_1} a_j (\sigma \circ {\tt a}_j)(\vx) = \sum_{j=1}^{n_1} a_j \sigma (\vw_j^T\vx + b_j)\,,
		\end{equation}
		where ${\bf W} = (\vw_1,\ldots,\vw_{n_1}) \in \R^{2 \times n_1}$, $\vec{a}= (a_1,\ldots,a_{n_1}) \in \R^{n_1}$ and $\vec{b} = (b_1,\ldots,b_{n_1})\in \R^{n_1}$ are called the parametrizations of the function $\network$. The set of all one layer  Heaviside neural networks is denoted by
		\begin{equation} \label{eq:N}
			\nnset := \set{\network[\vec{a},{\bf W},\vec{b}]: {\bf W} \in \R^{2 \times n_1}, \vec{a} \in \R^{n_1}, \vec{b} \in \R^{n_1}}\;.
		\end{equation}
		\item A \emph{two layer Heaviside neural network with $n_1$ neurons in the first layer and $n_2$ neurons in the second layer} (see \autoref{fig:twolayer}) is defined as
		\begin{equation}\label{eq:t}
			\vx \in \R^2 \mapsto \nettwo(\vx) : = \nettwo[{\bf A},\vec{c}, {\bf D}, \vec{b}, {\bf W}](\vx) := \sum_{i=1}^{n_2} c_i \sigma\left( \sum_{j=1}^{n_1} a_{ij} (\sigma \circ {\tt a}_j)(\vx) + d_{ij} \right)\;.
		\end{equation}
		${\bf A} = (a_{ij}) \in \R^{n_2 \times n_1}$, $\vec{c} =(c_i) \in \R^{n_2}$, ${\bf D} = (d_{ij}) \in \R^{n_2 \times n_1}$ and $\vec{b}=(b_j)\in \R^{n_1}$, ${\bf W} = (\vw_j) \in \R^{2 \times n_1}$ (note that affine linear function ${\tt a}_j$ (see \autoref{eq:ell}) depends on ${\bf W}$ and $\vec{b}$) are the parametrization of ${\tt t}$. The set of all two layer Heaviside neural networks is denoted by
		\begin{equation} \label{eq:T}
			\nntwo: = \set{ \nettwo[{\bf A},\vec{c}, {\bf D}, \vec{b}, {\bf W}] :  {\bf A} \in \R^{n_2 \times n_1}, \vec{c} \in \R^{n_2}, {\bf D} \in \R^{n_2 \times n_1}, \vec{b}\in \R^{n_1}, {\bf W} \in \R^{2 \times n_1}}\,.
		\end{equation}
		Note the functions ${\tt a}_j$ do not depend on the index $i$ of the second layer.
		\item A \emph{sigmoid neural network} is defined analogously to a Heaviside neural network by replacing the Heaviside function $\sigma$ with a sigmoid function $\sigma_\ve$. Accordingly, we denote one layer  and two layer sigmoid neural network by $\network_\ve$, ${\tt t}_\ve$, respectively, and the set of one layer  and two layer sigmoid neural networks by $\nnset_\ve$ and $\nntwo_\ve$, respectively.
	\end{itemize}
\end{definition}
We have schematically visualized the one layer  and two layer neural networks in \autoref{fig:NN}.

\begin{figure}[H]
\begin{subfigure}[t]{0.5\textwidth}
	\centering
	\includegraphics[width=1\linewidth]{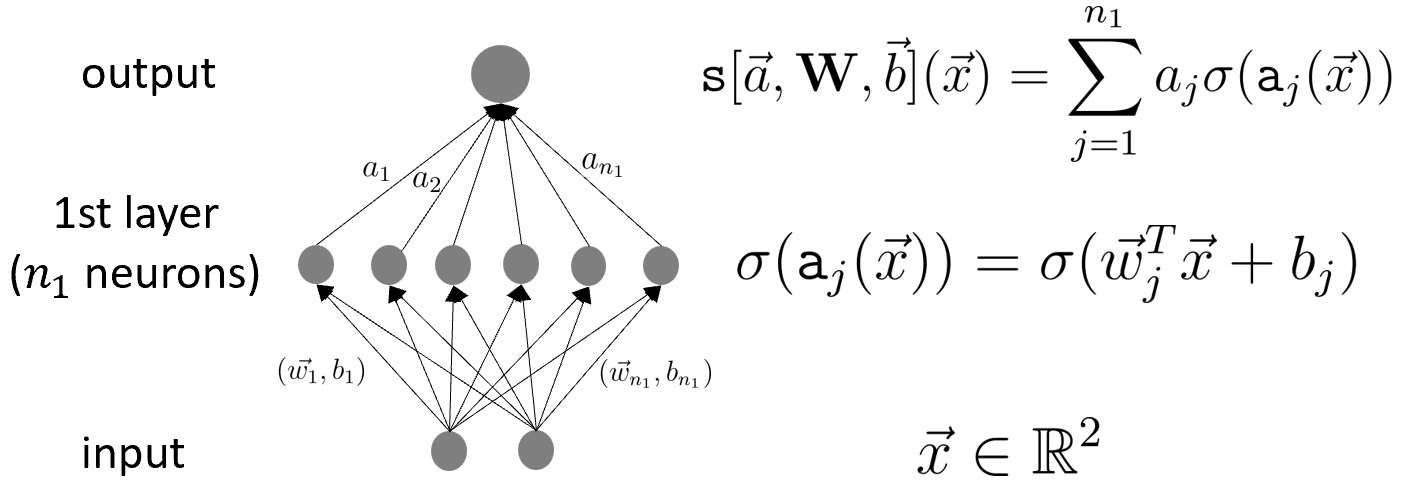}
	\caption{A one layer  neural network is used to approximate level set functions.}
	\label{fig:onelayer}
\end{subfigure}%
~
\begin{subfigure}[t]{0.5\textwidth}
	\centering
	\includegraphics[width=1\linewidth]{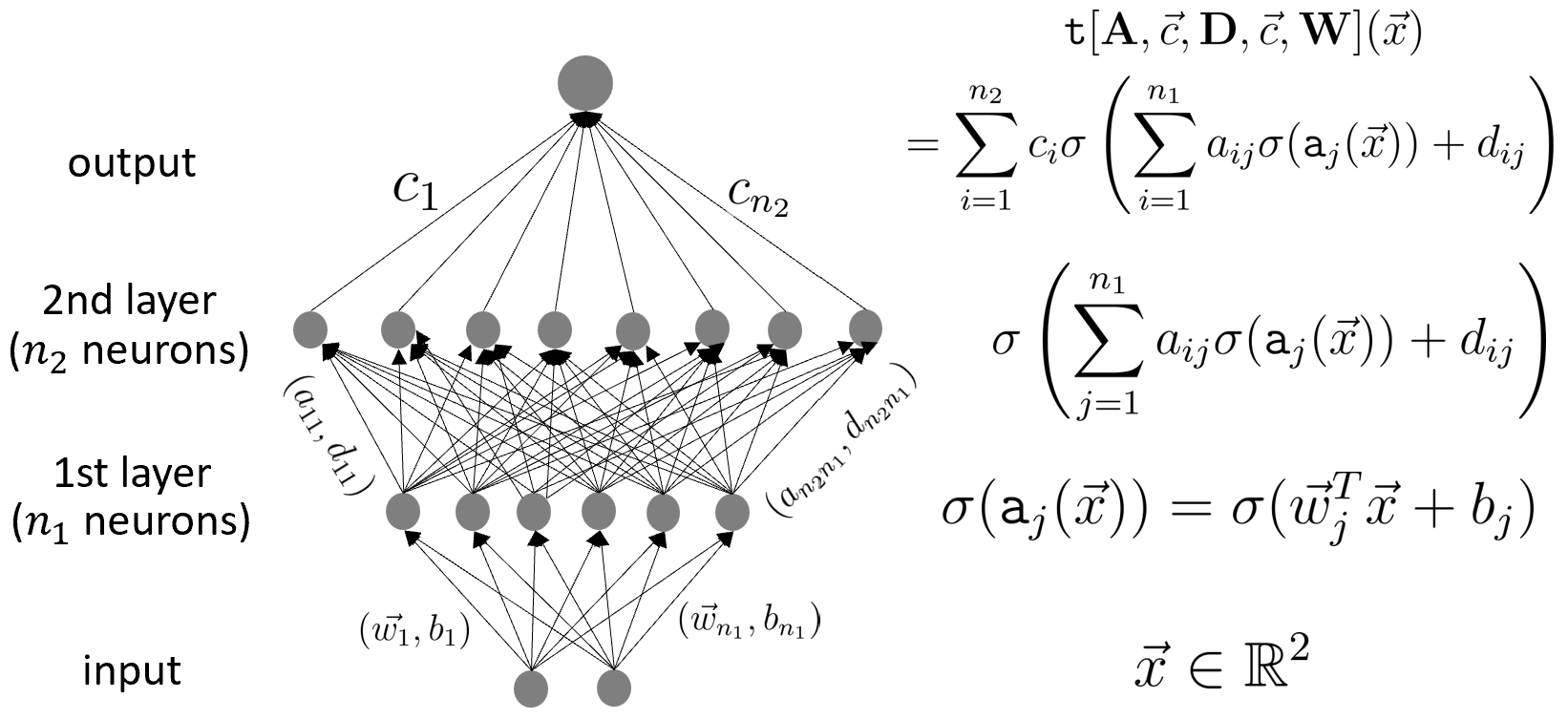}
	\caption{Two layer neural network are used to approximate (multi)-phase functions.}
	\label{fig:twolayer}
\end{subfigure}
\caption{One layer and two layer neural networks.}
\label{fig:NN}
\end{figure}
$\nnset$ (see \autoref{eq:N}) and $\nntwo$ (see \autoref{eq:T}) are sets of neural networks. These sets are not minimal, meaning that different combinations of parameters can potentially represent a given function. In practice, some parameters are fixed to reduce computational complexity. The term ``customized'' refers to the situation that parameters from the general formulation \autoref{eq:n} and \autoref{eq:t} are fixed a-priori. We add a subscript $c$ in the functions to emphasize on the customization.
\begin{definition}[Customized two layer network] \label{de:cust}
The network
	\begin{equation}\label{eq:d}
		\begin{aligned}
			\nettwo_c=\nettwo_c[\vec{a},{\bf W},\vec{b},d]: \R^2  \to \R\,, \quad
			\vx \mapsto \sigma \left( \network[\vec{a},{\bf W},\vec{b}](\vx) + d \right)= \sigma \left(\sum_{j=1}^{n_1} a_j \sigma (\vw_j^T\vx + b_j) + d\right)\,,
		\end{aligned}
	\end{equation}
\end{definition}
is called a \emph{customized} two layer network (see \autoref{eq:t}) with $n_2=1$, $c_1=1$, $a_{ij} = a_{1j} =: a_j$, and $d_{ij}=d_{1j}=d$. In this network architecture, the second layer of $t_c$ consists of a single neuron, so only the first layer of parameters is trainable.

The network $\nettwo_c$ in \autoref{eq:d} is specifically used for polygon parametrization (see \autoref{sec:example}).

\subsection{Heaviside networks for parametrizations of polygons} \label{sec:example}\hspace{0pt}\\

In the following, we consider the segmentation of a polygon in $\R^2$, first illustrated with the example of a triangle (see \autoref{fig:motiv_eg_2}).

	\begin{definition}[Parametrized polygons] \label{de:triangle} Let $3 \leq n_1 \in \N$ and let 
		${\tt a}_j$, $j=1,2,\ldots,n_1$ affine linear functions as defined in \autoref{eq:ell} with $\vw_j \neq 0$. The three sets associated with ${\tt a}_j$, $j=1,2,\ldots,n_1$, 
		\begin{equation} \label{eq:halfplane}
			H_j^1 := \set{\vx: {\tt a}_j(\vx) > 0},\, H_j^0 := \set{\vx: {\tt a}_j(\vx) = 0} \text{ and } H_j^{-1} := \set{\vx: {\tt a}_j(\vx) < 0}\,,
		\end{equation}
		are called \emph{parametrized half space decomposition} and the sets
		\begin{equation}\label{eq:omegajota}
			\Omega_\mult := \bigcap_{j=1}^{n_1} H_j^{\mult_j}, \quad \text{ where } \mult = (\mult_1,\mult_2,\ldots,\mult_{n_1}) \in \set{-1,0,1}^{n_1}\,,
		\end{equation}
		are called \emph{parametrized polygons}.
\end{definition}

\begin{remark} \label{le:general} We see from \autoref{eq:omegajota} that if there exists some index $j \in \set{1,\ldots,n-1}$ such that $\mult_j=0$, then $\Omega_\mult$ is a subset of the line of the boundary of the polygon. It can even be a point or an empty set.
\end{remark}

\begin{example}[Heaviside network of a triangle] \label{ex:triangle} We consider three affine linear functions ${\tt a}_j$, $j=1,2,3$, as defined in \autoref{eq:ell} with $\vw_j \neq 0$, where the level lines are in general position.
Assume that all 3 lines are in general position, meaning that no two lines are parallel and no three lines intersect at a single point, so that the combinatorial structure of the intersections is maximally generic.
The triangle is the intersection of the sets $H_j^1$, $j=1,2,3$, such that the parametrizations $\vec{w}_j$, $j=1,2,3$ is the normal vector of the zero level set of $\affine_j$ pointing into the triangle.

There exist seven non-empty open sets as defined in \autoref{eq:omegajota}.
    Note that $\Omega_{(-1,-1,-1)} = \emptyset$ because if it would not be empty all zero level sets $H_j^0$, $j=1,2,3$ meet in one point, which cannot happen because we assume that the triangle is not degenerate.
    Now we define a customized one layer  Heaviside network (compare \autoref{eq:n}), where the number of neurons equals the number of lines, that is, $n_1=3$:
    \begin{equation} \label{eq:nc}
    	\network_c(\vx) := \sum_{j=1}^3 \frac{1}{3} \sigma({\tt a}_j(\vx)) \text{ for all } \vx \in \R^2\;.
    \end{equation}
    $\network_c$ attains the values $1/3,2/3,1$ almost everywhere. However, we can also make a finer analysis of $\network_c$ by considering the following sets of measure zero
    \begin{equation}\label{eq:omegajotaII}
    	\Omega_\mult := \bigcap_{j=1}^3 H_j^{\mult_j}, \quad \text{ where } \mult = (\mult_1,\mult_2,\mult_3) \in \set{-1,0,1}^3 \text{ and some } \mult_j=0\;.
    \end{equation}
    The latter combinations \autoref{eq:omegajotaII} can only happen at zero level set $H_j^0$, $j=1,\ldots,3$.
    Moreover, $\Omega_{(0,0,0)} = \emptyset$ because if it would not be empty all zero level sets $H_j^0$, $j=1,2,3$ meet in one point. Moreover, the sets $\Omega_{(0,0,-1)}$ and $\Omega_{(0,-1,-1)}$ are empty (see \autoref{fig:triang_inter} and take into account \autoref{le:general}).
    We consider different cases:
    \begin{enumerate}
    	\item At a point $\vx$ where $\iota =(1,1,1)$ we have $\network_c(\vx)=1$. At a point where $\iota=(1,1,0)$ we have$\network_c(\vx)=2/3$ and where $\iota=(1,0,0)$ we have $\network_c(\vx)=1$.
    	\item For \begin{equation*}
    			\iota = (1,0,0), (1,1,0), (1,0,-1)
    		\end{equation*}
    		we get the
    		\begin{equation}\label{eq:values}
    			\network_c(\vx)=2/3,5/6,1/2\,,
    		\end{equation}
    		respectively. $\Omega_{(1,1,0)}$ and $\Omega_{(1,0,0)}$ are edges and vertices of the triangle, respectively and $\Omega_{(1,0,-1)}$ correspond to the edges of the complement polygons that do not share boundaries with the triangle (see \autoref{fig:triang_inter}).
    \end{enumerate}

	\begin{figure}[H]
		\centering   \includegraphics[width=1\linewidth]{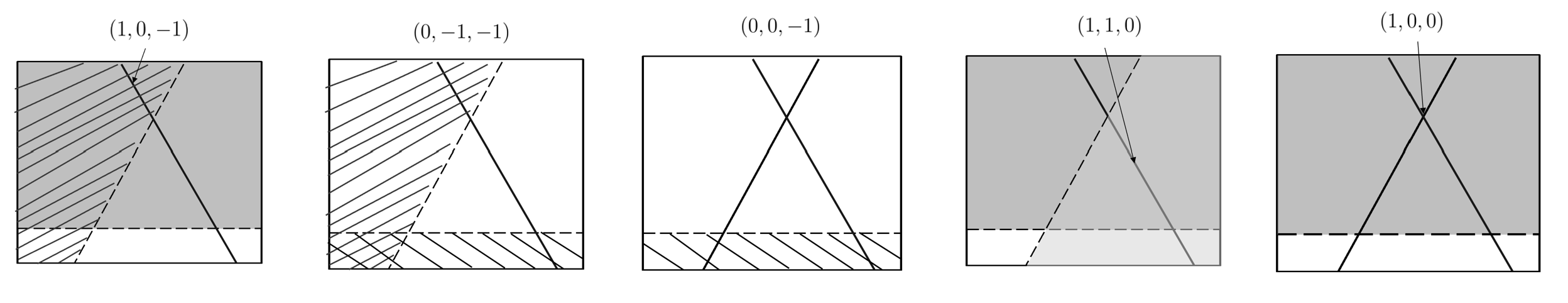}
		\caption{Possible intersections of $\cap_{j=1}^3 H_j^{\iota_j}$ with at least one $\iota_j = 0$. The gray regions represent $H_j^1$, dashed regions represent $H_j^{-1}$ and solid line represents $H_j^{0}$. The intersections give either edges of the polygons in the complement of the triangle (first image); or empty sets (second and third images); or edges and vertices of the to-be-segmented triangle (fourth and last images).}
		\label{fig:triang_inter}
	\end{figure}	
	
For a choice
\begin{equation} \label{eq:kappa_ptw}
	-1 < \kappa < -1+\frac{1}{2n_1}= -5/6,
\end{equation}
the two layer Heaviside network (compare with \autoref{eq:d})
\begin{equation} \label{eq:nettwocp}
   \nettwo_c^+(\vx) := \sigma \left( \kappa + \network_c (\vx) \right)
\end{equation}
coincides pointwise with $\chi_{\Omega_{(1,1,1)}}$. In a neural network analogy, $\sigma({\tt a}_j)$, $j=1,2,3$, corresponds to an activation of three neurons ${\tt a}_j$ in the first layer. The functions $\network_c$ and $\nettwo_c^+$ denote customized one layer  and two layer neural network, respectively. Some coefficients in the general form of a neural network have been customized, such as  $a_j=1/3$, for $j=1,2,3$, and $d = \kappa$.

\begin{figure}[H]
	\centering   \includegraphics[width=1\linewidth]{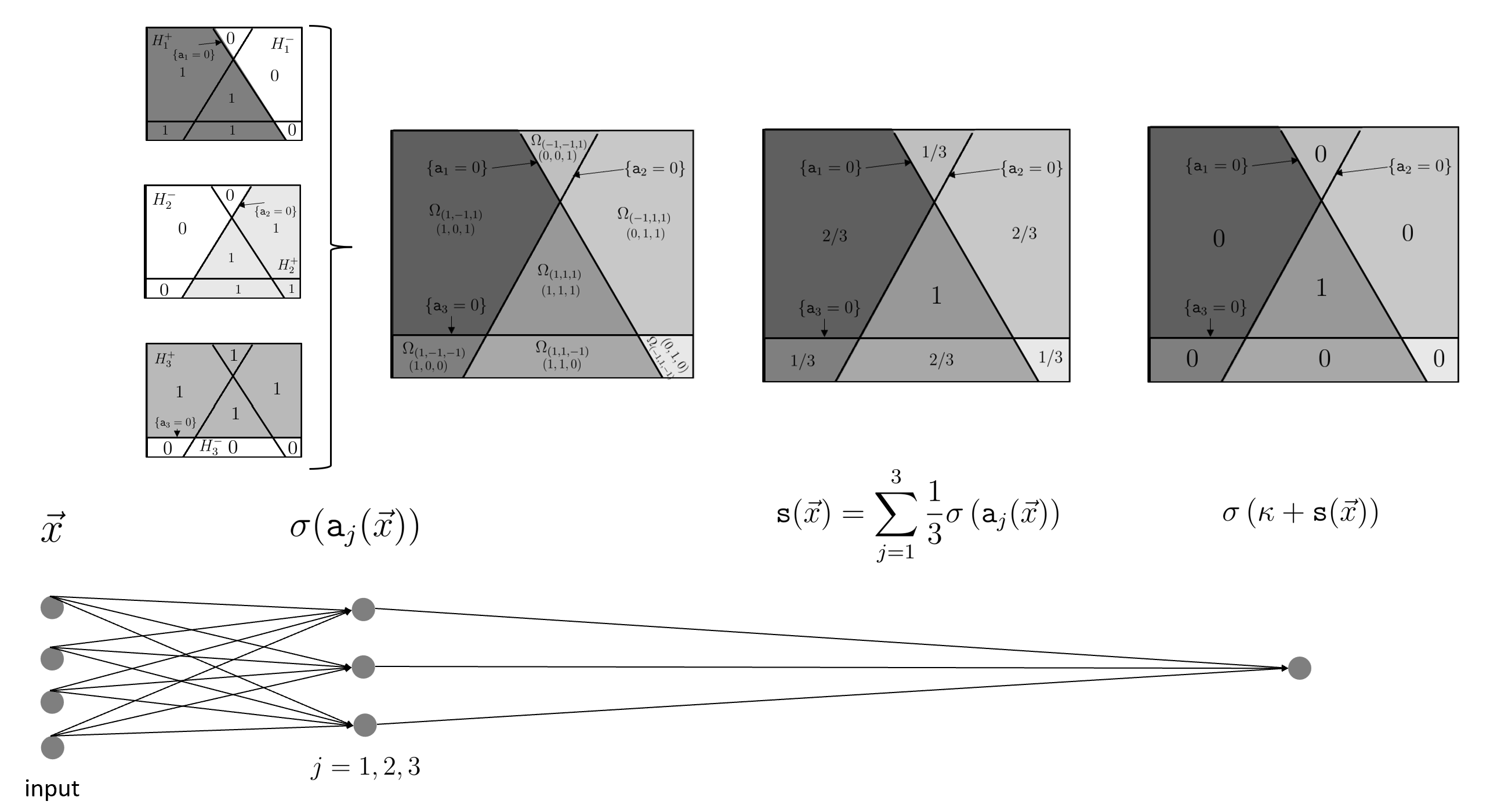}
	\caption{\textbf{1.} Each line $H_j^0 = \{{\tt a}_j = 0\}$, for $j=1,2,3$, divides the image domain into 2 half planes $H_j^1$ and $H_j^{-1}$. The half planes containing the triangle $T$ (shown in gray) attain the value $1$ and the complementary half planes (shown in white) attain the value $0$. The three images each depict a binary partition (gray and white regions), corresponding to the activation of a single neuron in the first layer of a neural network. Taken together, these binary regions represent the activations of three neurons in the first layer, represented by $\sigma({\tt a}_j)$, $j = 1, 2, 3$, where ${\tt a}_j > 0$ inside the gray region and ${\tt a}_j(\vx) < 0$ outside the gray region, as in the standard level set formulation.\\
		\textbf{2.} Seven subregions $\Omega_{\iota}$ for $\iota \in (-1,1)^3$ and their triplets of activation values $(\sigma({\tt a}_1), \sigma({\tt a}_2), \sigma({\tt a}_3))$.\\
		\textbf{3.} The average of these activations triplets are taken over each subregion $\Omega_\iota$, resulting in values $1/3,2/3,1$. This corresponds to the linear combination $\network_c = \sum_{j=1}^3 \frac{1}{3}\sigma({\tt a}_j)$.\\
		\textbf{4.} The function $\sigma(\kappa + \network_c)$ attains the value $1$ on the triangle $\Omega_{(1,1,1)}$. On the complement of $\Omega_{(1,1,1)}$, $\sigma(\kappa + \network_c)$ is zero almost everywhere.}\label{fig:motiv_eg_2}
\end{figure}
\end{example}

While in the previous example we considered representing the characteristic function of a triangle with a customized two layer network, we now consider representing a piecewise constant function on the triangle $\Omega_{(1,1,1)}$ and its complement.
\begin{example}
	The two layer network
\begin{equation} \label{eq:CVfctpara}
   \nettwo(\vx) = c_1\nettwo_c^+(\vx) + c_{-1}\nettwo_c^-(\vx)= c_1\chi_{\Omega_{(1,1,1)}}(\vx) + c_{-1}\chi_{\Omega\backslash\Omega_{(1,1,1)}}(\vx)\,,
\end{equation}
where
\begin{equation*}
\nettwo_c^{-}(\vx) := 1-\nettwo_c^+(\vx) =1  - \sigma \left( \kappa+\network_c(\vx)\right) = \sigma \left( -\kappa - \network_c(\vx)\right) = \chi_{\Omega\backslash\Omega_{(1,1,1)}}(\vx)\,,
\end{equation*}
is piecewise constant on $\Omega_{(1,1,1)}$ and its complement.
$\nettwo$ is a two layer network (see \autoref{eq:t} and \autoref{eq:T}) with coefficients
\begin{equation*}
	\begin{aligned}
		{\bf A} &= (a_{ij}) \in \R^{2 \times 3} \text{ is independent of } j \text{ with } a_{ij} = \frac{(-1)^{i-1}}{3}\,, \\
		\vec{c} &= (c_1, c_{-1}) \in \R^2,\\
		{\bf D} &= (d_{ij}) \in \R^{2 \times 3} \text{ is independent of } j \text{ with } d_{ij} = (-1)^{i-1}\kappa\,,\\ \vec{b} &\in \R^3,\\
		{\bf W} &\in \R^{3 \times 2},  \mathrm{det}([\vw_j,\vw_k])\neq 0 \text{ for all } j,k=1,2,3, j \neq k.
	\end{aligned}
\end{equation*}
\end{example}

Now, we extend the triangle example to more general polygons. It is a question from combinatorics, how many non empty sets exist for general polygons with $n_1$ corners.  The answer is provided in \autoref{le:polygon}. Interestingly the number of non empty sets is not $2^{n_1}-1$ as might be expected from the calculations with the triangle:

\begin{theorem}[\cite{Ste26}] \label{le:polygon} 
	Let $3 \le n_1 \in \N$ be the number of affine linear functions $\affine_j$ with $\vw_j \neq 0$ for $j=1,\ldots,n_1$, and let $\mult = (\mult_1,\ldots,\mult_{n_1}) \in \set{-1,1}^{n_1}$. Let $H_j^{\mult_j}$ be defined as in \autoref{eq:halfplane}. 
	Then, there exist at most $\frac{n_1(n_1+1)}{2} + 1$ non empty sets $\bigcap_{j=1}^{n_1} H_j^{\mult_j}$, and
	$\bigcap_{j=1}^{n_1} H_j^{1}$ is a $n_1$-edge polygon.
\end{theorem}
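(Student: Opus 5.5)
The plan is to recognise the sets $\bigcap_{j=1}^{n_1} H_j^{\mult_j}$ with $\mult \in \set{-1,1}^{n_1}$ as exactly the open cells of the arrangement of the $n_1$ lines $H_j^0$, and then count cells with the classical induction on the number of lines. First I will show that the non-empty sets of this form correspond one-to-one to the connected components of $\R^2 \setminus \bigcup_j H_j^0$. Each $H_j^{\pm 1}$ is an open half plane, so every such intersection is convex, hence connected, and contained in the complement of the lines. Conversely, on each connected component of the complement every ${\tt a}_j$ has constant sign, so the component lies in exactly one such intersection. Distinct sign vectors give disjoint sets. Therefore the number of non-empty sets equals the number $r(n_1)$ of cells of the arrangement.

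Next I will prove $r(n) \le \frac{n(n+1)}{2} + 1$ by induction on $n$. For $n=0$ there is one region, and a single line gives two. Suppose the first $k-1$ lines are placed and the $k$-th line $L$ is added. Then $L$ meets each earlier line in at most one point, since lines are either parallel, equal, or cross once. So $L$ contains at most $k-1$ intersection points and is cut into at most $k$ relatively open pieces (segments or rays). Each piece lies inside exactly one old cell, which is convex, and splits it into at most two cells. Any old cell not met by $L$ is unchanged. Hence adding $L$ increases the number of cells by at most $k$, with no increase at all if $L$ coincides with an earlier line. Summing gives $r(n_1) \le 1 + \sum_{k=1}^{n_1} k = \frac{n_1(n_1+1)}{2}+1$.

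For the second assertion, $\bigcap_j H_j^1$ is an intersection of $n_1$ open half planes, hence a convex polygonal region with at most $n_1$ edges, each edge lying on some $H_j^0$. That it has exactly $n_1$ edges can only be read under the standing setup of \autoref{ex:triangle}: the ${\tt a}_j$ are chosen with $\vw_j$ the inward normal of the $j$-th edge of a given non-degenerate convex $n_1$-gon, so every line supports a genuine edge. I expect this to be the main obstacle. Without such an assumption the statement is false: the region can be empty or unbounded, or some constraints can be redundant. So I will state this hypothesis explicitly, in line with the construction used for the triangle, and verify that under it the region is bounded with exactly $n_1$ edges. Boundedness follows because an inward-oriented convex $n_1$-gon has normals positively spanning $\R^2$. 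The edge count follows because each $H_j^0 \cap \overline{\bigcap_k H_k^1}$ is by construction the $j$-th edge of the polygon, a segment of positive length.

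Finally I will note when the bound is attained. Equality holds exactly when the lines are in general position (no two parallel, no three concurrent), since then each new line meets the previous ones in $k-1$ distinct points. This explains why the count is not $2^{n_1}-1$ once $n_1 \ge 4$.
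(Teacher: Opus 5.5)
Your proof is correct and is essentially the classical argument behind the result the paper merely cites (\cite{Ste26}) without giving a proof of its own: identify the non-empty sign cells with the faces of the line arrangement, then insert the lines one at a time, with the $k$-th line cut into at most $k$ pieces, each of which splits exactly one convex cell into two. You are also right that the assertion that $\bigcap_j H_j^1$ is an $n_1$-edge polygon needs the setup left implicit in \autoref{ex:triangle} (the $\affine_j$ are the inward-oriented edge lines of a non-degenerate convex $n_1$-gon); under that hypothesis it follows most directly from the fact that the interior of a convex polygon is the intersection of the open half planes bounded by its edge lines.
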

Note that for $n_1=3$ (that means a triangle) we get $\frac{n_1(n_1+1)}{2} + 1 = 2^{n_1}-1=7$ non empty sets, which coincides with the calculations from \autoref{ex:triangle}. For $n_1 > 3$ the majority of intersection of $H_j^{\mult_j}$ (see \autoref{eq:omegajota}) is however empty.

The motivating \autoref{ex:triangle} demonstrates the efficiency of using customized two layer neural networks (see \autoref{eq:d}) to represent the characteristic function of a polygon (see \autoref{le:polygon}). Since polygons approximate arbitrary objects (see \autoref{fig:parametrized}) it can be anticipated that neural network level sets are efficient parametrizations for level sets in Chan-Vese segmentation methods, which are reviewed in the later sections.
\begin{figure}[H]
	\centering
	\includegraphics[width=0.6\linewidth]{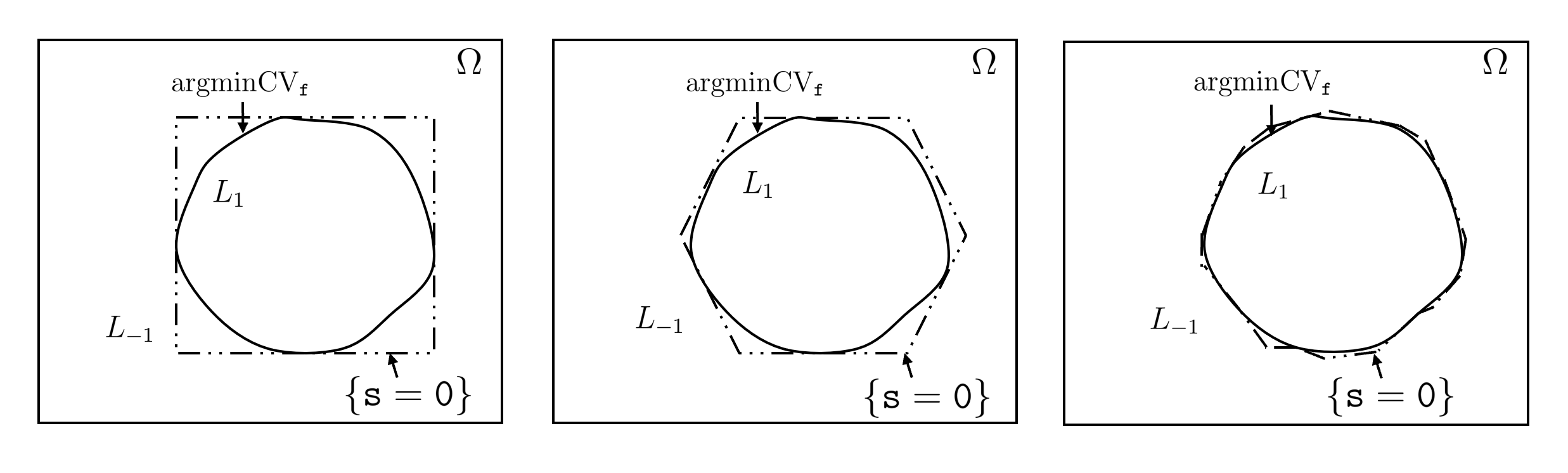}
	\caption{Polygonal approximation of arbitrary bounded regions. Increasing the number of polygonal edges, which corresponds to increasing the number of neurons $n_1$ in the network, improves the approximation accuracy.}\label{fig:parametrized}
\end{figure}
Moreover, for efficient numerical implementation of Chan-Vese methods for segmentation, we approximate the Heaviside activation function $\sigma$ by a smooth sigmoid function $\sigma_\ve$.
This approximation is discussed in \autoref{sec:approx_}. 

\section{Chan-Vese segmentation} \label{Chan_Vese}

We begin by reviewing Chan-Vese segmentation models (see \cite{ChaVes01,VesCha02}) and the according level set methods for computational implementation.

\subsection{The Chan-Vese functional for sets}
We present the general formulation dating back to the multi-object segmentation model from Chan-Vese in \cite{VesCha02}:
Let $m \in \N$ be given and let $\Omega \subseteq \R^2$, which is partitioned into $2^m$ pairwise disjoint open subsets up to their boundaries (see \autoref{fig:intersection} and \autoref{fig:clustering}):
\begin{equation*}
	\set{L_\mult : \mult \in \set{-1,1}^m} \text{ satisfying } \left(\bigcup_{\mult \in \set{-1,1}^m} L_\mult \right) \cup \left(\bigcup_{\mult \in \set{-1,1}^m} \partial L_\mult \right)  = \Omega\;.
\end{equation*}

\begin{figure}[H]
    \centering
    \includegraphics[width=0.8\linewidth]{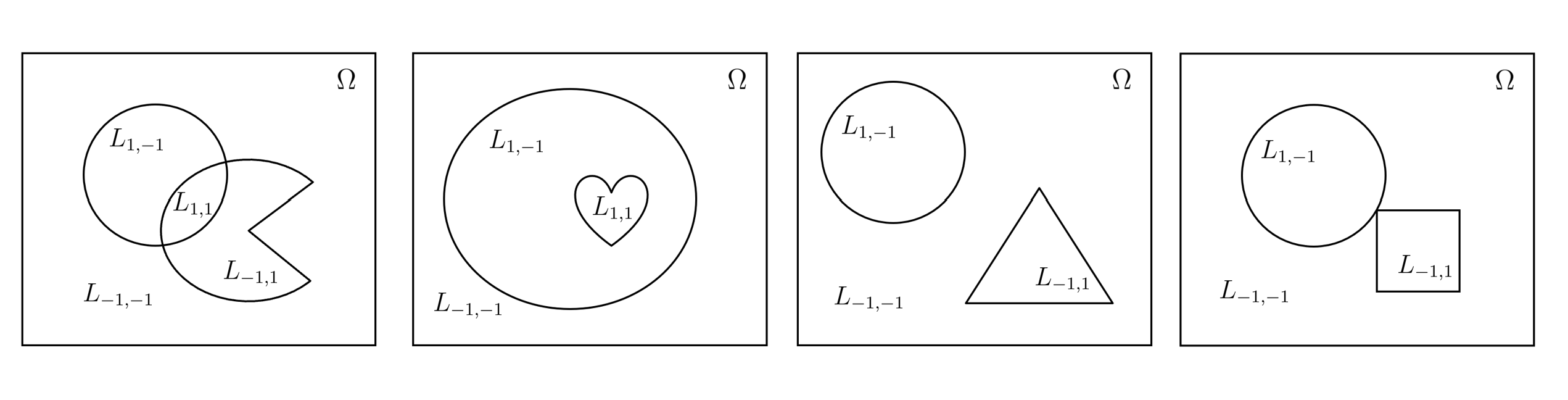}
\caption{Different types of intersections of two objects that induced four disjoint segmented regions $L_{\pm 1, \pm 1}$ (some of these regions may be empty). Each of the object can be described by the positive values of a level set function. \textbf{First:} The intersection has positive Lebesgue measure, but neither region is contained in the other. \textbf{Second:} One region is contained within the other. \textbf{Third:} The two regions are disjoint. \textbf{Fourth:} The two regions intersect only at a finite number of points.}\label{fig:intersection}
\end{figure}
\begin{remark}
When $m=1$, the index set of $\mult$ is simply the set $\set{-1,1}$ and $L_1$ corresponds to a segment and $L_{-1}$ corresponds to the region outside the segment. Analogously, when $m=2$, the index set of $\mult$ contains four elements $\set{(1,1), (1,-1), (-1,1), (-1,-1)}$, which represent four different segments.
\end{remark}

The Chan-Vese model is designed to approximate a given function $\f:\Omega \to \R$
by a piecewise constant
\begin{equation}\label{eq:CV_piecewise}
    \multiphase(\vx) = \sum_{\mult \in \{-1,1\}^m} c_\mult \chi_{L_\mult}(\vx)\;
\end{equation}
which minimizes the Chan-Vese functional
\begin{equation}\label{eq:CV_og}
	\mathrm{CV}_{\tt f}(\{(c_\mult,L_\mult)\}_{\mult \in\{-1,1\}^m}) = \sum_{\mult \in \{-1,1\}^m}\int_{L_\mult} \left( \multiphase - \f \right)^2 \mathrm{d}\vx + \mu \hspace*{-0.03\textwidth} \sum_{\mult \in \{-1,1\}^m} \mathcal{H}^1(\partial L_\mult) + \nu \mathcal{H}^2\left(\bigcup_{\mult \in \{-1,1\}^m} L_\mult\right),
\end{equation}
where $\mathcal{H}^n$, for $n=1,2$, denotes the $n$-dimensional Hausdorff measure. We note that the $2$-dimensional Lebesgue measure coincides for Lebesgue measurable sets with the $2$-dimensional Hausdorff measure on $\R^2$ \cite[Theorem 2.5]{EvaGar15}.

\begin{remark} \label{rmk:c1minimizer}
Since the one-dimensional measures of all $\partial L_\mult$ are penalized, each $L_\mult$ must be a set of finite perimeter (a Caccioppoli set), meaning that $\partial L_\mult$ is measurable with finite one-dimensional measure. Moreover, the main theorem of \cite[Chapter 5]{MorSol95b} guarantees the existence of a segmentation $K := \{\partial L_\mult\}_{\mult \in\{-1,1\}^m}$ that minimizes the functional $\mathrm{CV}_{\tt f}$. The minimizer is not unique (see \cite[Chapter 15]{MorSol95b}). Furthermore, the theorem states that each $\partial L_\mult$ of this minimizer is either a regular $C^1$-curve, or a piecewise regular $C^1$-curve with singular points consisting of triple junctions, where three branches meet at $120^o$ angles, or boundary points where $K$ meets $\partial\Omega$ at $90^o$ angles. This suggests that it is sufficient to study piecewise $C^1$-level sets function for minimization.
\end{remark}

In the following, we first study the classical numerical Chan–Vese method, then introduce its neural network parametrization, and finally present its smooth approximation (as in \autoref{fig:rela}).

\subsection{The Chan-Vese functional for multiphase level sets} \label{sec:parametrized}\hspace{0pt}\\
Chan \& Vese \cite{ChaVes01,VesCha02} proposed a level set formulation of the functional defined in \autoref{eq:CV_og} to minimize it numerically. The formulation utilizes that a piecewise constant function $\multiphase$ can be represented as a \emph{multiphase level set function} (see \autoref{eq:CVfctpara}):
\begin{definition}[\textbf{Multiphase level set function}] \label{lsf}
	Let $m \in \N$. For every $k=1,\ldots,m$, let $\levelfun_k \in C^0(\overline{\Omega})$ be a continuous \emph{level set function}, which in particular implies that
		\begin{align}\label{eq:GammaI0}
			\mathcal{L}_k^1 = \set{\vx \in \Omega : \levelfun_k(\vx) > 0}\,,
			\mathcal{L}_k^{-1} = \set{\vx \in \Omega : \levelfun_k(\vx) < 0} \text{ and } \\
			\partial \mathcal{L}_k^1 = \mathcal{L}_k^0 =  \{\vx \in \Omega : \levelfun_k(\vx) = 0\}, \text{ with }  \mathcal{H}^1(\partial \mathcal{L}_k^1) < \infty. \nonumber
		\end{align}
 \end{definition}
 $\mathcal{L}_k^{\pm 1}$ represents two segments. In the language of level set methods, \autoref{eq:GammaI0} means that the $0$-level set is not fat. We emphasize that a signed distance function of a domain is a typical example of a level set function.

We note that the level set function $\levelfun$ is here an arbitrary continuous function, whereas in \autoref{ex:triangle}, the level set function is ${\tt a}$, which is affine linear. As a result, in \autoref{eq:GammaI0}, the sets $\mathcal{L}_k^{\pm1}$ are bounded open sets and the sets $\mathcal{L}_k^0$ are curves, whereas in \autoref{ex:triangle}, $H_k^{\pm1}$ are half planes and $H_k^{0}$ are lines.

\begin{definition}[\textbf{Multiphase level set function of degree $2^m$}] \label{lsfm}
	Let $m \in \N$. A multiphase level set function \emph{of degree }$2^m$ is a function of the form
	\begin{equation}\label{eq:multiphaselevelset}
		\begin{aligned}
			\multiphase_\levelfun(\vx) = \hspace*{-0.03\textwidth} \sum_{\mult = (\mult_1,\ldots,\mult_m) \in \{-1,1\}^m} \hspace*{-0.02\textwidth} c_\mult \prod_{k=1}^m \sigma(\mult_k \levelfun_k(\vx))\;.		
		\end{aligned}
	\end{equation}
\end{definition}

Since we assumed that $\Omega$ is bounded, $\multiphase_\levelfun \in L^2(\Omega)$ is piecewise constant and takes values $c_\mult \in \R$ on the interior of at most $2^m$ regions $L_\mult$, $\mult \in \set{-1,1}^m$,
where
\begin{align}\label{eq:GammaI}
L_\mult = L_{(\mult_1,\ldots,\mult_m)} = \bigcap_{k=1}^m \set{\vx \in \Omega : \mult_k \levelfun_k(\vx) > 0} = \bigcap_{k=1}^m \mathcal{L}_k^{\mult_k},
\end{align}
where $\mathcal{L}_k^{\pm 1}$ are defined in \autoref{eq:GammaI0}.
The set $\{L_\mult:\mult \in \{-1,1\}^m\}$ forms a partition of $\Omega$ up to sets of measure zero.
Moreover, because of \autoref{eq:GammaI0}, the boundaries of $\{L_\mult:\mult \in \set{-1,1}^m\}$ are the zero level sets of the functions $\{\levelfun_k : k=1,\ldots,m\}$ and we have:
\begin{equation*}
\text{For every } \mult \in \{-1,1\}^m\text{, if } \vx \in \partial L_\mult \text{, then there exists } k \in \{1,\ldots,m\} \text{ such that } \vx \in \mathcal{L}_k^0,
\end{equation*}
and conversely,
\begin{equation*}
\text{for every } k \in \{1,\ldots,m\}\text{ and } \vx \in \mathcal{L}_k^0 \text{, then there exists } \mult \in \{-1,1\}^m \text{ such that } \vx \in \partial L_\mult,
\end{equation*}
where $\mathcal{L}_k^{0}$ are defined in \autoref{eq:GammaI0}. \autoref{fig:clustering} illustrates the set $\{L_\mult:\mult \in \set{-1,1}^m\}$ for $m=1, 2$.

\begin{figure}[H]
	\centering
	\includegraphics[width=0.5\linewidth]{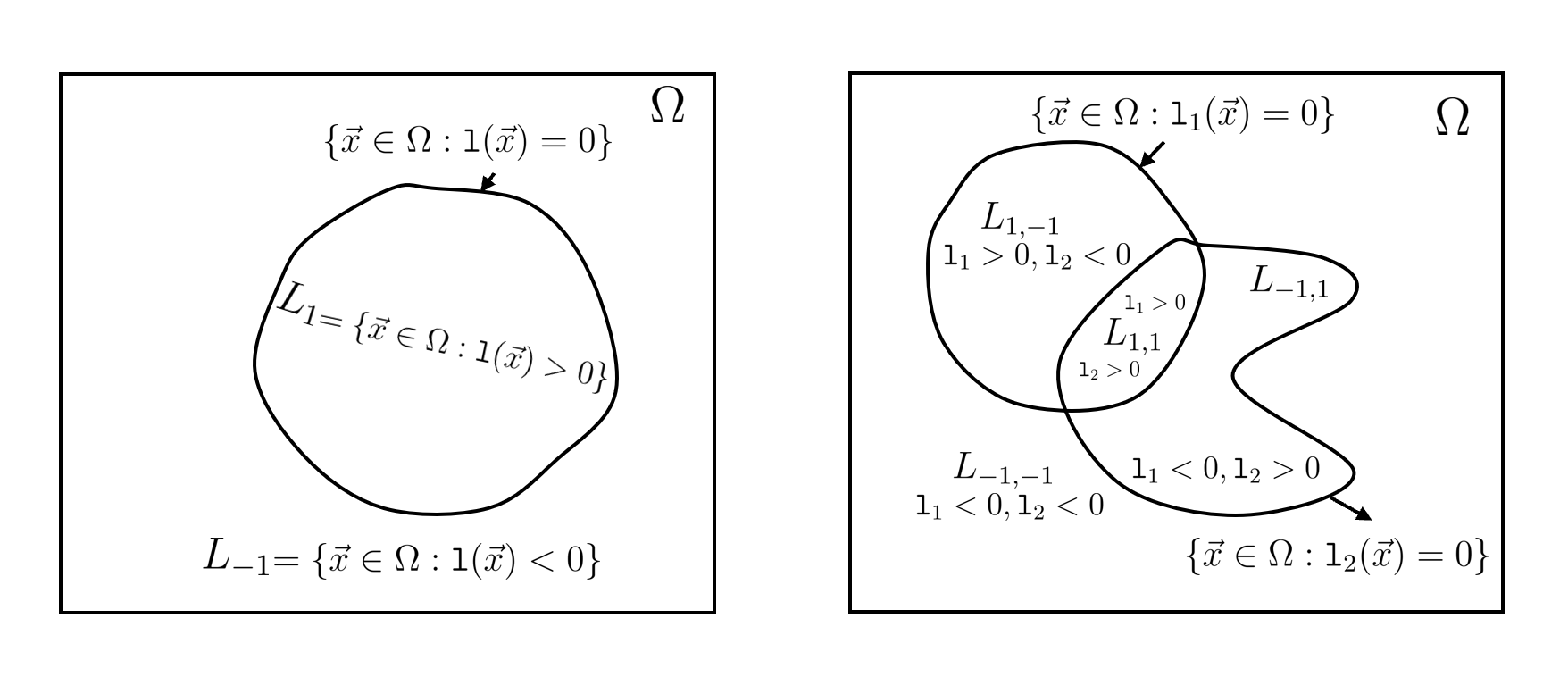}
	\caption{Image segmentation characterized by level set functions and their signs. Here, $1$ denotes the positive sign of a level set function, corresponding to the inside of a region, and $-1$ denotes the negative sign, corresponding to the outside. \textbf{Left:} Segmentation with one level set function $\levelfun$ $(m=1)$, $\Omega$ is segmented in two segments $L_{1}$ and $L_{-1}$. \textbf{Right:} Segmentation with two level set functions $\levelfun_1, \levelfun_2$ $(m=2)$, $\Omega$ to segment four sets $L_{\pm 1,\pm 1}$.}\label{fig:clustering}
\end{figure}

\begin{remark}
	According to \autoref{lsf}, if we aim to identify $s$ segments, we need at least $m =\lceil \log_2 s\rceil$ level set functions, where $\lceil \cdot \rceil$ denotes the smallest integer greater than or equal to its argument.
\end{remark}

The Chan-Vese functional, defined in \autoref{eq:CV_og}, can be reformulated in terms of multiphase level set function $\multiphase_\levelfun$ (see \autoref{lsfm}) as follows:
\begin{equation}\label{eq:CV_LS}
\begin{aligned}
	& \mathrm{CV}^{\mathrm{LS}}_{\tt f}((c_\mult)_{\mult \in \set{-1,1}^m}, (\levelfun_k)_{k=1}^m)\\
	:= &  \sum_{\mult \in \{-1,1\}^m}\int_{\Omega} \left( c_\mult - \f(\vx) \right)^2\prod_{k=1}^m \sigma(\mult_k\levelfun_k(\vx)) \ \mathrm{d}\vx \\
    & \quad + \mu \sum_{k = 1}^m \mathcal{H}^1(\{\levelfun_k = 0\}) 
    + \nu \mathcal{H}^2\left(\bigcup_{j=1}^m \{\levelfun_k > 0\}\right)\\
	= & \sum_{\mult \in \{-1,1\}^m}\int_{\Omega} \left( c_\mult - \f(\vx) \right)^2\prod_{k=1}^m \sigma(\mult_k\levelfun_k(\vx)) \ \mathrm{d}\vx\\
    & \quad + \mu \sum_{k=1}^m \int_{\Omega}\delta(\levelfun_k(\vx))\abs{\nabla\levelfun_k(\vx)} \ \mathrm{d}\vx + \nu \left(\sum_{k=1}^m (-1)^{k+1} \sum_{\substack{A \subseteq \{1,2, \ldots, m\} \\ \abs{A} = k}} \int_{\Omega} \prod_{k \in A}\sigma(\levelfun_k(\vx)) \ \mathrm{d}\vx\right).
\end{aligned}
\end{equation}
Note that the length term $\delta(\levelfun_k)\abs{\nabla\levelfun_k}$ is understood in a distributional sense: it defines a one-dimensional measure supported on the zero level set of $\levelfun_k$, integrating along the normal direction to the curve. Here, the one-dimensional Dirac-mass $\delta$ is the distributional derivative of the Heaviside function $\sigma$.

\begin{example} For $m=1$ and $m=2$, the Chan-Vese functional looks as follows:
\begin{enumerate}
    \item $m=1$:
\begin{equation*}
	\begin{aligned}
		\mathrm{CV}^{\mathrm{LS}}_{\tt f} (c_1, c_{-1}, \levelfun) =& \int_{\Omega} (\f(\vx)-c_1)^{2}\sigma(\levelfun(\vx)) \mathrm{d}\vx + \int_{\Omega} (\f(\vx)-c_{-1})^{2}(1-\sigma(\levelfun(\vx))) \mathrm{d}\vx \\
		& \quad + \mu \int_{\Omega} \delta(\levelfun)\abs{\nabla\levelfun}\mathrm{d}\vx +\nu \int_{\Omega} \sigma(\levelfun(\vx))\mathrm{d}\vx.
	\end{aligned}
\end{equation*} This is used for single object segmentation applications.
    \item $m=2$:
\begin{equation*}
	\begin{aligned}
~ & \mathrm{CV}^{\mathrm{LS}}_{\tt f} ((c_\mult), \levelfun_1,\levelfun_2) \\
= &
\int_{\Omega} (\f(\vx)-c_{1,1})^{2}\sigma(\levelfun_1(\vx))\sigma(\levelfun_2(\vx)) \mathrm{d}\vx + \int_{\Omega} (\f(\vx)-c_{1,-1})^{2}\sigma(\levelfun_1(\vx))(1-\sigma(\levelfun_2(\vx))) \mathrm{d}\vx\\
+ &\int_{\Omega} (\f(\vx)-c_{-1,1})^{2}(1-\sigma(\levelfun_1(\vx)))\sigma(\levelfun_2(\vx)) \mathrm{d}\vx +
  \int_{\Omega} (\f(\vx)-c_{-1,-1})^{2}(1-\sigma(\levelfun_1(\vx)))(1-\sigma(\levelfun_2(\vx))) \mathrm{d}\vx\\
+ &\mu \int_{\Omega} \delta(\levelfun_1)\abs{\nabla\levelfun_1} + \delta(\levelfun_2)\abs{\nabla\levelfun_2} \mathrm{d}\vx +\nu \int_{\Omega} \sigma(\levelfun_1(\vx)) + \sigma(\levelfun_2(\vx)) - \sigma(\levelfun_1(\vx))\sigma(\levelfun_2(\vx))\mathrm{d}\vx\;.
   \end{aligned}
\end{equation*}
Note that we have $\sigma(-x) = 1-\sigma(x)$ for all $x\in \R$, therefore $(1-\sigma(\levelfun_i)) = \sigma(-\levelfun_i)$ for $i=1,2$.
\end{enumerate}
\end{example}

Vese \& Chan \cite{VesCha02} suggested an evolution process, in which $(\levelfun_{k}(t,\vx))_{k=1}^m$ and $(c_\mult(t))_{\mult \in \{-1,1\}^m}$ evolve with time $t$ and, as $t \to \infty$, approximate the minimizers $(\levelfun_k)_{k=1}^m$ of $\mathrm{CV}^{\mathrm{LS}}_{\tt f}$ defined in \autoref{eq:CV_LS} and the corresponding constants $(c_\mult)_{\mult \in \{-1,1\}^m}$. Concretely, starting from some initial level sets $\set{\levelfun_{k,0} = 0 : k=1,\ldots,m}$ and from the initial constant values $c_\mult(0)$ on the level sets, satisfying,
\begin{equation*}
\levelfun_k(0,\vx) = \levelfun_{k,0}(\vx) \text{ for } \vx \in \Omega \text{ and } c_\mult(0) = \frac{\int_{\Omega} \f(\vx)\prod_{k=1}^{m}\sigma(\mult_k\levelfun_{k,0}(\vx)) \mathrm{d}\vx}{\int_{\Omega} \prod_{k=1}^{m}\sigma(\mult_k\levelfun_{k,0}(\vx)) \mathrm{d}\vx}\,,
\end{equation*}
the evolution process is governed by the system
\begin{equation} \label{eq:evo}
\begin{cases} \hspace{0.5cm}
	\begin{aligned}
        \frac{\partial\levelfun_k}{\partial t}(t,\vx) = \delta(\levelfun_k)(t,\vx)
    & \Biggl[
    \mu \mathrm{div}\left(\frac{\nabla\levelfun_k}{\abs{\nabla\levelfun_k}} \right)(t,\vx) \Biggr. \\
    & - \nu \sum_{j=1}^m (-1)^{j+1} \sum_{\substack{A \subseteq \{1,2, \ldots, m\} \\ \abs{A} = j}} \prod_{\substack{i \in A\\ i \neq j}} \sigma(\levelfun_i)(t,\vx)  \\
    & \Biggl.  -  \sum_{\mult \in\{-1,1\}^m}  \mult_k (c_\mult(t) - \f(\vx))^2
    \Biggr] \text{ for } (t,\vx) \in (0,\infty) \times \Omega, \\
    & \hspace{-3.5cm} \text{ together with boundary conditions} \\
    \displaystyle\frac{\delta(\levelfun_k)(t,\vx)}{\abs{\nabla\levelfun_k(t,\vx)}}
        \frac{\partial\levelfun_k}{\partial\Vec{n}}(t,\vx) &= 0  \ \text{ for } (t,\vx) \in (0,\infty) \times \partial\Omega,\\
       & \hspace{-3.5cm} \text{ and the evolution for }  c_\mult:\\
    c_\mult(t) &= \frac{\int_{\Omega} \f(\vx)\prod_{k=1}^{m}\sigma(\mult_k\levelfun_k(t,\vx)) \mathrm{d}\vx}{\int_{\Omega} \prod_{k=1}^{m}\sigma(\mult_k\levelfun_k(t,\vx)) \mathrm{d}\vx} \text{ for } t \in (0,\infty)\;.
\end{aligned}
\end{cases}
\end{equation}
Here, each $\levelfun_k$, $k=1,\ldots,m$, evolves in the direction normal to its level-lines.

\bigskip\par\noindent
In contrast to the standard pixel based formulation (unparametrized) of the Chan-Vese functional in \autoref{eq:CV_LS},
we consider below a \emph{parametrized} Chan-Vese model, where neural networks are used to represent the level set functions $\levelfun$ and the piecewise constant segmentation functions $\multiphase$.

\subsection{Chan-Vese functional for two layer neural networks} \label{subsec:paraCV}\hspace{0pt}\\
We consider the Vese-Chan segmentation of multiple objects, where the level set functions $\set{\levelfun_k}_{k=1}^m$ (see \autoref{eq:multiphaselevelset}) are parametrized by one layer  Heaviside networks. Since polygons approximate arbitrary sets, one layer  Heaviside networks are an adequate ansatz for numerical minimization of the Chan-Vese level set functional.

\begin{definition}[\textbf{Multiphase function for polygons}] \label{lsfm1}
	Let $m \in \N$. A multiphase Heaviside function for polygons is a function of the form
	\begin{equation} \label{eq:para}
	\vx \in \Omega \subseteq \R^2 \mapsto \multiphase_\network(\vx):=\sum_{\mult \in  \{-1,1\}^m} c_\mult \prod_{k=1}^m \sigma(\mult_k\network_k (\vx)).\;
	\end{equation}
\end{definition}

In comparison with \autoref{eq:multiphaselevelset}, we consider here $\sigma$ composed with a one layer  network
	\begin{equation} \label{eq:neto}
		\network_k  := \network[\vec{a}_k,{\bf W}_k,\vec{b}_k] := \sum_{j=1}^{n_1} (\vec{a}_k)_j \sigma ((\vw_{k})_j^T\vx + (\vec{b}_k)_j)\,,
	\end{equation}
	which is defined in \autoref{eq:n} with $(\vec{a}_k)_j, (\vec{b}_k)_j \in \R$ denoting the $j$-th elements of the vectors $\vec{a}_k, \vec{b}_k \in \R^{n_1}$, respectively, and  $(\vw_{k})_j \in \R^2$ is the $j$-th column of the matrix ${\bf W}_k := \left((\vw_{k})_1 \ (\vw_{k})_2 \cdots (\vec{w}_{k})_{n_1}\right) \in \R^{2 \times n_1}$. Note that, in general, the number of neurons $n_1$ can be different for each $k = 1,\ldots,m$. For simplicity of notation, we assume that all one layer  networks $\network_k$, $k=1,\ldots,m$ have the same number of neurons.

\begin{remark}
We recall that $\multiphase_\network$ is defined via one layer networks $\{\network_k : k=1, \ldots, m\}$, while
$\multiphase_\levelfun$, which is defined via general level set functions $\{\levelfun_k : k=1,\ldots,m\}$. However, $\multiphase_\network$ itself is not a one layer  network. Due to the term $\sigma(\mult_k\network_k (\vx))$, $\multiphase_\network$ consists of two layers. Indeed, \autoref{thrm:lsnn} below shows that $\multiphase_\network$ coincide pointwise with a two layer network as defined in \autoref{eq:t}.
\end{remark}

\begin{remark}
	The shifted one layer Heaviside neural network
	\begin{equation} \label{eq:netshift}
		\vx \mapsto \network(\vx) = \sum_{j=1}^{n_1} a_j \sigma (\vw_j^T\vx + b_j) + d
	\end{equation}
	can be written as a one layer Heaviside neural network $\sum_{j=1}^{n_1+1} a_j \sigma (\vw_j^T\vx + b_j)$.
This can be seen by defining 
        $a_{n_1+1} = d$, $\vw_{n_1+1} \in \R^2$ and $b_{n_1+1} \in \R$ in such a way that
$$\sigma(\vw_{n_1+1}^T\vx + b_{n_1+1}) = 1 \text{  for all  } \vx \in \Omega.$$

For example, choose any $\vw_{n_1+1} \in \R^2$ and then select $b_{n_1+1} \in \R$ sufficiently large so that $b_{n_1+1} > \max_{\vx \in \Omega} \abs{\vw_{n_1+1}^T\vx}$,
ensuring that
 $\vw_{n_1+1}^T\vx + b_{n_1+1} \geq - \max_{\vx \in \Omega} \abs{\vw_{n_1+1}^T\vx} + b_{n_1+1} > 0 \text{ for all } \vx \in \Omega$, which is possible since $\Omega$ is bounded.

This construction is useful for the results below, for instance, in \autoref{th:CV_connex}.
\end{remark}

By definition, the multiphase function for polygons $\multiphase_\network$, defined in \autoref{eq:para}, is piecewise constant, and its corresponding superlevel sets
\begin{align}\label{eq:GammaIs}
	S_\mult = S_{(\mult_1,\ldots,\mult_m)} = \bigcap_{k=1}^m \set{\vx \in \Omega : \mult_k \network_k(\vx) > 0} = \bigcap_{k=1}^m \mathcal{S}_k^{\mult_k},
\end{align}
are polygons. The main question of interest (as a generalization of \autoref{sec:example}, where $m=1$) is the converse direction: given polygonal sets $\{P_\mult :\mult \in \{-1,1\}^m \}$, we seek to construct a family of \emph{one layer } networks $\{\network_k : k=1,\ldots,m\}$ whose zero level sets coincide with the boundaries of $\{P_\mult :\mult \in \{-1,1\}^m \}$ (see \autoref{fig:polygon_app}). Consequently, the induced multiphase function $\multiphase_\network$ is piecewise constant on this polygonal partition (compare with \autoref{eq:CV_piecewise}), that is,
\begin{equation} \label{eq:sn}
\multiphase_\network(\vx) =  \sum_{\mult \in \{-1,1\}^m} c_\mult \chi_{P_\mult}(\vx) \text{ for almost all } \vx \in \Omega\,,
\end{equation}
and $P_\mult = S_\mult$ for all $\mult \in \{-1,1\}^m$, where $S_\mult$ is as defined in \autoref{eq:GammaIs}.
This construction is formalized in \autoref{th:CV_connex} below.

\begin{theorem} \label{th:CV_connex} 	
Let $\set{\mathcal{P}_k: k=1,\ldots,m} \subseteq \Omega$ be a collection of non empty, open, connected polygons such that the boundaries of any two polygons intersect in only finitely many points, and the boundary of each polygon is a simple closed curve. For every $k = 1,\ldots, m$, define
$\mathcal{P}_k^1 := \mathcal{P}_k$ and $\mathcal{P}_k^{-1} := \Omega\backslash\mathcal{P}_k$, and for every $\mult \in \set{-1,1}^m$, define \footnote{the sets $P_\mult$ can be empty or disconnected.}
\begin{equation} \label{eq:pmult}
	P_\mult = P_{(\iota_1,\iota_2,\ldots,\iota_m)}  := \bigcap_{k=1}^m \mathcal{P}_k^{\iota_k}.
\end{equation}
Then, there exists a family of one layer  networks $\set{\network_k: k=1,\ldots,m} \subseteq \nnset$ as defined in \autoref{eq:n} such that for every $\mult \in \{-1,1\}^m$
\begin{equation}\label{eq:GammaP}
P_\mult = \bigcap_{k=1}^m \set{\vx \in \Omega : \mult_k \network_k(\vx) > 0}\;.
\end{equation}
Moreover, if $\vx \in \partial P_\mult$, then there exists some $k \in \{1,\ldots,m\}$ such that $\vx \in \{\network_k = 0\}$, and for every $k \in \{1,\ldots,m\}$ and $\vx \in \{\network_k = 0\}$, there exists  
some $\mult \in \{-1,1\}^m$ such that $\vx \in \partial P_\mult$. 
\end{theorem}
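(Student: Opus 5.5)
The plan is to reduce everything to the case of a single polygon. It suffices to construct, for each $k$, one network $\network_k \in \nnset$ with
\begin{equation*}
\set{\network_k>0}\cap\Omega=\mathcal{P}_k,\qquad \set{\network_k<0}\cap\Omega=\Omega\setminus\overline{\mathcal{P}_k},\qquad \set{\network_k=0}\cap\Omega=\partial\mathcal{P}_k\cap\Omega .
\end{equation*}
Granting this, $\set{\mult_k\network_k>0}=\mathcal{P}_k^{\mult_k}$ holds up to the boundary $\partial\mathcal{P}_k$. Intersecting over $k$ gives \eqref{eq:GammaP}, with the convention (implicit in the statement) that $\mathcal{P}_k^{-1}$ is the open exterior $\Omega\setminus\overline{\mathcal{P}_k}$. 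The boundary assertions then follow from $\partial P_\mult\subseteq\bigcup_k\partial\mathcal{P}_k=\bigcup_k\set{\network_k=0}$. Conversely, a point of $\partial\mathcal{P}_k$ lies on the boundary of some $P_\mult$, because the boundaries of two different polygons meet only in finitely many points. Hence near a generic point of $\partial\mathcal{P}_k$ both sides of the edge lie in sets $P_\mult$ that differ only in the $k$-th index. For the finitely many exceptional points one passes to the closure.

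\textbf{Convex case.} If $\mathcal{P}_k$ is convex with edge lines $\set{\affine_j=0}$, $j=1,\ldots,n$, and normals pointing inward, I would generalize \autoref{ex:triangle}. Take
\begin{equation*}
\network_k=\sum_{j=1}^n\sigma(\affine_j)-\left(n-\tfrac12\right),
\end{equation*}
where the constant is realized by an extra neuron via \eqref{eq:netshift}. This equals $1/2$ inside the polygon and $0$ on the relative interior of an edge. At any point with some $\affine_j<0$ it is at most $-1/2$, and at a vertex it equals $-1/2$. To make vertices also lie in the zero set, I would perturb the weights $a_j$, for instance to $a_j=1+\eta_j$ with small generic $\eta_j$, and adjust the shift accordingly. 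Alternatively one can observe that for the last assertion of the theorem it is enough that $\set{\network_k=0}\subseteq\partial\mathcal{P}_k$ and that every point of $\partial\mathcal{P}_k$ is a limit of zeros.

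\textbf{Nonconvex case (main obstacle).} Here a one-layer network cannot take a maximum over convex pieces. My first attempt is to work with the full line arrangement generated by all edge lines of $\mathcal{P}_k$. Every function in the span of $\set{1,\sigma(\affine_j)}$ is constant on the open cells of this arrangement, and $\mathcal{P}_k$ is a union of such cells. The task then becomes solving a finite system of sign constraints on the cells of the arrangement that meet $\Omega$: positive on cells in $\mathcal{P}_k$, negative on the others. The values on edges are then determined by the convention $\sigma(0)=\tfrac12$, which gives the average of the neighbouring cells.

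I would try to satisfy these constraints by induction on the number of reflex vertices. Cut $\mathcal{P}_k$ along the line $\set{\affine=0}$ extending an edge at a reflex vertex into pieces $A\subseteq H^{1}$ and $B\subseteq H^{-1}$, and take networks $\network_A,\network_B$ from the induction hypothesis. Then set
\begin{equation*}
\network_k=\network_A+\network_B+M\left(\sigma(\affine)-\tfrac12\right)+\text{(corrections)},
\end{equation*}
with $M$ large, so that the cut neuron decides which of $\network_A$ or $\network_B$ controls the sign on each side. Since $\Omega$ is bounded, all networks are bounded there, which makes large weights usable.

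The delicate point is that this term shifts the values of $\network_A$ and $\network_B$ on the wrong side. Whether this can be compensated, or whether a smarter weighting of the arrangement cells is needed, is the step I expect to require real work. The fallback is to verify solvability of the cellwise linear sign system directly, using the fact that the half-plane indicators separate the cells of the arrangement.
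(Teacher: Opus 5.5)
\textbf{Gap: the nonconvex case is left open, and it cannot be closed.} Your reduction to one polygon and your convex-case network are essentially the paper's proof. The paper uses $\network_k=\sum_j\frac1{n_1}\sigma(\affine_{jk})+\kappa$ with $-1<\kappa<-1+\frac1{2n_1}$; your shift $n-\frac12$ is the excluded endpoint $\kappa=-1+\frac1{2n_1}$ of this range. The paper applies this to \emph{every} polygon, which tacitly uses $\mathcal P_k=\bigcap_j\set{\affine_{jk}>0}$, i.e.\ convexity. You are right that non-convexity is the crux, but the statement is actually false for some simple polygons.

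Take $m=1$, $\Omega$ a large disc, and $\mathcal P$ the interior of $[0,1]\times[0,2]\cup[1,2]\times[1,3]$, a simple octagon. For any $\network\in\nnset$, the neurons whose zero line is $\set{x_1=1}$ sum to $c+A\,\sigma(x_1-1)$ for one constant $A$, since $\sigma(-t)=1-\sigma(t)$. Pick $p\in\set{1}\times(0,1)$ and $q\in\set{1}\times(2,3)$ lying on no other neuron line. For small $\delta>0$,
$$\network(p+\delta e_1)-\network(p-\delta e_1)=A=\network(q+\delta e_1)-\network(q-\delta e_1).$$
But $\mathcal P=\set{\network>0}\cap\Omega$ forces $A<0$ at $p$, where $\mathcal P$ lies to the left of the edge. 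It forces $A>0$ at $q$, where $\mathcal P$ lies to the right. So no one layer Heaviside network, with any lines and weights, satisfies even the $\mult=1$ part of \eqref{eq:GammaP}. Your induction over reflex vertices and your cellwise sign system must fail for any polygon with two collinear edges having opposite inward normals.

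\textbf{Gap: the vertex repair cannot work.} For $\network\in\nnset$ and any point $\vx_0$, neurons whose line misses $\vx_0$ are constant near $\vx_0$. For the others, $\sigma(\affine(\vx_0+ru))+\sigma(\affine(\vx_0-ru))=1=2\sigma(\affine(\vx_0))$. Hence
$$\network(\vx_0)=\tfrac12\bigl(\network(\vx_0+ru)+\network(\vx_0-ru)\bigr)$$
for every unit vector $u$ and small $r>0$. Let $v$ be a convex vertex of $\mathcal P_k$ inside $\Omega$, and take $u$ along an adjacent edge. Then $v+ru$ lies on the edge and $v-ru\in\Omega\setminus\overline{\mathcal P_k}$. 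Your three identities would give $\network_k(v)=\frac12\network_k(v-ru)<0$ instead of $0$.

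Thus your intermediate target is unattainable whenever $\overline{\mathcal P_k}\subset\Omega$. Perturbing weights or adding neurons does not help. Your fallback does not help either: for $m=1$, your open-exterior convention in \eqref{eq:GammaP} already forces $\set{\network_1=0}\cap\Omega=\partial\mathcal P_1\cap\Omega$, vertices included. The first ``Moreover'' assertion demands the same.

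The statement's own convention $\mathcal P_k^{-1}=\Omega\setminus\mathcal P_k$ is no better. Since the sets $P_\mult$ cover $\Omega$, \eqref{eq:GammaP} then forces $\network_k\neq0$ on $\Omega$, which contradicts the ``Moreover'' part outright. You were right to notice this tension, but switching conventions does not resolve it.

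What is actually provable is \eqref{eq:GammaP} under that convention for convex $\mathcal P_k$, using the paper's network, which never vanishes. The boundary assertions then have to be stated for $\partial\set{\network_k>0}=\partial\mathcal P_k$ rather than for $\set{\network_k=0}$.
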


\begin{proof}
We aim to show that for every open polygon $\mathcal{P}_k$, $k=1,\ldots,m$, there exists a one layer  Heaviside network $\network_k \in \nnset$ such that

\begin{equation} \label{eq:chiPk}
\chi_{\mathcal{P}_k^{\mult_k}} = \sigma(\mult_k\network_k).
\end{equation}
Then, we have that, if $\vx \in \mathcal{P}_k^{\mult_k}$, then $\sigma(\mult_k\network_k(\vx)) = 1$ and therefore, $ \mult_k\network_k(\vx) > 0$ and conversely, if $\mult_k\network_k(\vx) > 0$, then $\chi_{\mathcal{P}_k^{\mult_k}}(\vx) = 1$ and we obtain that $\vx \in \mathcal{P}_k^{\mult_k}$. This yields the relation
$$\mathcal{P}_k^{\mult_k} = \{\vx \in \Omega: \mult_k\network_k(\vx) > 0\},$$
and \autoref{eq:GammaP} follows from the definition of $P_\mult$.
Moreover, the desired relations on the boundary of $P_\mult$ follows from the definition of $P_\mult$ and the assumption on the boundaries of $\mathcal{P}_k$.

In order to prove \autoref{eq:chiPk}, we prove that: for each $k = 1,\ldots, m$, there exists a one layer  Heaviside network $\network_k \in \nnset$ such that
    		\begin{equation} \label{eq:chi}
    		    \chi_{\mathcal{P}_k^1} = \sigma(\network_k)\;.
    		\end{equation}
Then, due to the identity $\sigma(-t) = 1 -\sigma(t)$ for all $t \in \R$, we obtain the relation
$$\chi_{\mathcal{P}_k^{-1}} = 1 - \chi_{\mathcal{P}_k} = 1 - \sigma(\network_k) = \sigma(-\network_k),$$
and together, we obtain \autoref{eq:chiPk}.

To prove \autoref{eq:chi}, we note that every edge $e_{jk}$, $j=1,\ldots,n_1$, of the polygon $\mathcal{P}_k$ is a subset of a line
    		$$\affine_{jk}(\vx) = \vw_{jk}^T(\vx - \vec{v}_{jk}) = \vw_{jk}^T\vx + b_{jk}\,,$$
    		where $\vw_j$ is the normal vector of $e_{jk}$ pointing inside the polygon $\mathcal{P}_k$, $\vec{v}_{jk}$ is the vertex, and $b_{jk} = -\vw_{jk}^T\vec{v}_{jk}$. The assumption that the edges $e_{jk}$ intersect pairwise only at finitely many points and that $\partial \mathcal{P}_k$ is a simple closed curve guarantees that each $\mathcal{P}_k$ has a connected interior region, and hence a consistent notion of inward and outward direction for its edges. Consequently, an inward unit normal vector $\vw_{jk}$ is uniquely determined for each edge $e_{jk}$.

Let
\begin{equation} \label{eq:kappa_gen}
-1 < \kappa < -1 + \frac{1}{2n_1},
\end{equation}
then the following one layer  Heaviside network (see
\autoref{eq:netshift}),
    		\begin{equation} \label{eq:net_k}
\network_k(\vx) := \sum_{j=1}^{n_1} \frac{1}{n_1}\sigma\left(\affine_{jk}(\vx)\right)+ \kappa
\end{equation}
    		satisfies \autoref{eq:chi}: Since each term $\sigma(\affine_{jk}(\vx))$ takes values in $\{0,1/2,1\},$ the sum $\sum_{j=1}^{n_1} \frac{1}{n_1}\sigma\left(\affine_{jk}(\vx)\right)$ takes values in $\{0,\frac{1}{2n_1},\frac{1}{n_1},\frac{3}{2n_1},\frac{2}{n_1},\ldots,1\}$. If all terms equal $\frac{1}{n_1}$, then the sum equals $1$. Otherwise, the maximal possible value is $1-\frac{1}{2n_1}$. As a result, for constants $\kappa$ satisfying $-1 < \kappa< -1+ \frac{1}{2n_1}$, the expression $\sum_{j=1}^m \frac{1}{n_1}\sigma(\affine_j(\vx)) + \kappa$ is strictly positive if and only if $\sigma(\affine_{jk}(\vx)) = 1$ for all $j=1,\ldots,m$, which means that
\begin{equation*}
    \chi_{P_\mult}(\vx) = \sigma\left(\network_k(\vx)\right) \text{ pointwise in } \Omega,
\end{equation*}
and we have proved \autoref{eq:chi}.
\end{proof}

\begin{remark}
We comment on the assumptions of the polygon $\mathcal{P}_k$. In a typical setting illustrated in the first row of \autoref{fig:polygon_app}, we obtain four non trivial disjoint polygons $P_\mult$. If two polygons $\mathcal{P}_1$ and $\mathcal{P}_2$ are non overlapping, we get the four intersections $\emptyset$, $\mathcal{P}_1$, $\mathcal{P}_2$ and $\Omega\backslash(\mathcal{P}_1 \cup \mathcal{P}_2)$. If $\mathcal{P}_1$ is contained within $\mathcal{P}_2$, we get the four intersections $\emptyset$, $\mathcal{P}_1$, $\mathcal{P}_2\backslash\mathcal{P}_1$ and $\Omega\backslash(\mathcal{P}_1 \cup \mathcal{P}_2)$ (see \autoref{fig:polygon_app}, second row). Polygons with holes and polygons whose boundaries are self-intersecting do not satisfy the assumptions in \autoref{th:CV_connex} (see \autoref{fig:polygon_app_not}).
\end{remark}

\begin{figure}[H]
	\centering
	\includegraphics[width=0.9\linewidth]{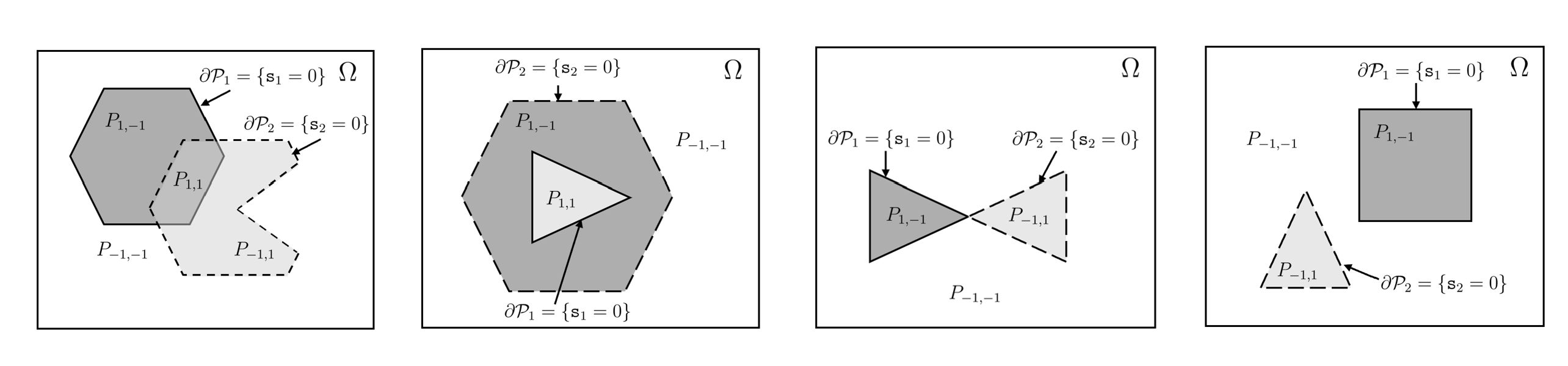}
\caption{Different configurations in which two polygons $\mathcal{P}_1, \mathcal{P}_2$ partition $\Omega$ into disjoint polygonal regions $P_{\pm1,\pm1}$, where $\mathcal{P}_1, \mathcal{P}_2$ and $P_{\pm1,\pm1}$ satisfy the assumptions in \autoref{th:CV_connex}. These four cases correspond to the polygonal approximations of the 4 configurations shown in \autoref{fig:intersection}. \textbf{First:} Typical setting in which two polygons $\mathcal{P}_1, \mathcal{P}_2$ partition $\Omega$ into four connected, disjoints polygons $P_{\pm1,\pm1}$. The non overlapping polygons $P_{\pm1,\pm1}$ are induced by a partition of $\Omega$ obtained by classifying each point in $\Omega$ according to whether it lies inside or outside each of the two polygons $\mathcal{P}_1$ and $\mathcal{P}_2$. There exist one layer  Heaviside networks $\network_1, \network_2$ (see \autoref{eq:n}) whose zero level sets coincide with the boundaries of $\mathcal{P}_1$ and $\mathcal{P}_2$. \textbf{Second to fourth:} The settings in which some of the sets $P_{\pm1,\pm1}$ maybe empty. \textbf{(Second)} $\mathcal{P}_1$ is contained in $\mathcal{P}_2$. \textbf{(Third)} $\partial\mathcal{P}_1$ and $\partial\mathcal{P}_2$ intersect in only finitely many points.  \textbf{(Fourth)} $\mathcal{P}_1$ and $\mathcal{P}_2$ are non overlapping.} \label{fig:polygon_app}
\end{figure}
\begin{figure}[H]
	\centering
	\includegraphics[width=0.4\linewidth]{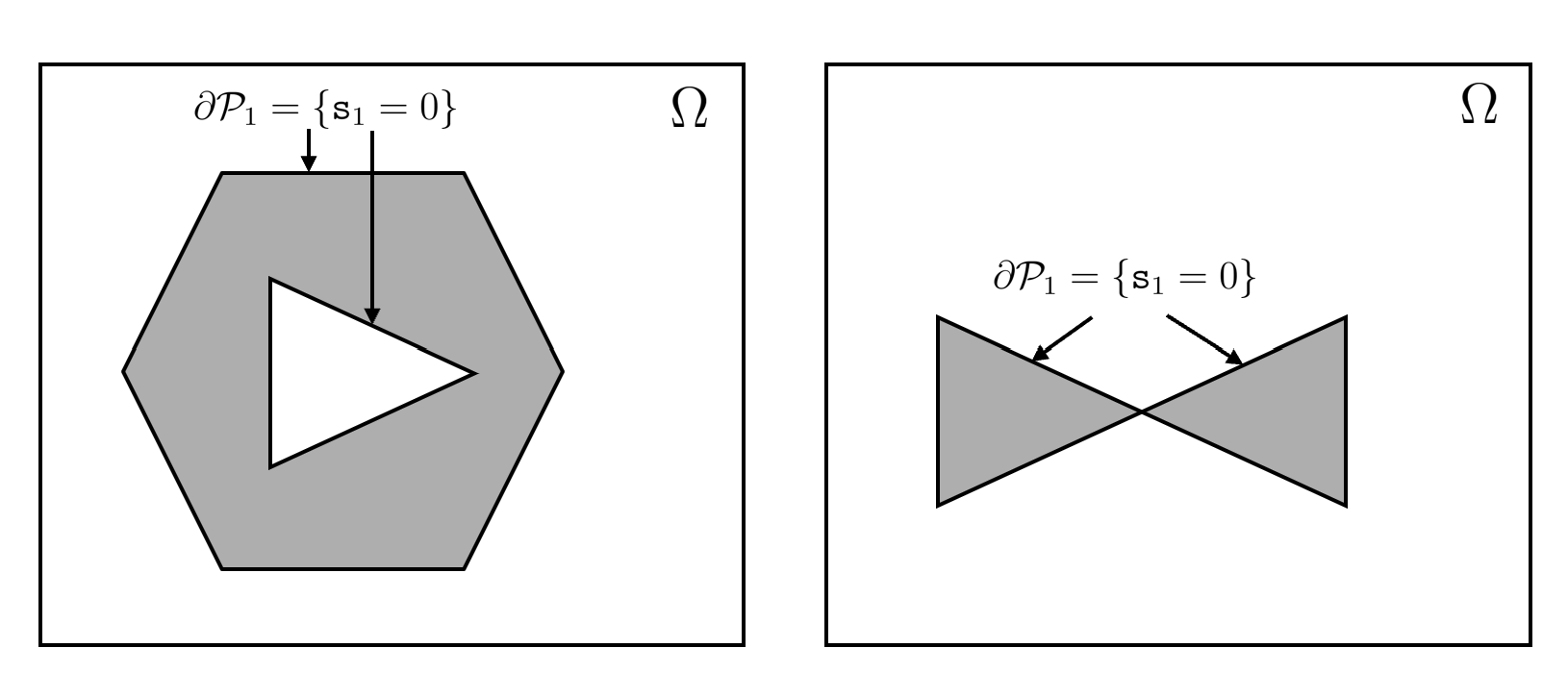}
\caption{Different configurations in which the polygons $\mathcal{P}_1, \mathcal{P}_2$ do not satisfy the assumptions in \autoref{th:CV_connex}: \textbf{(Left)} $\mathcal{P}_1$ is a polygon with holes. \textbf{(Right)} $\mathcal{P}_1$ is a polygon whose boundaries are self-intersecting. Unlike the second and third cases in \autoref{fig:polygon_app}, all the edges belong to a single polygon. In the cases of polygons with holes, each $P_\mult$ can consists of many disconnected polygonal pieces with holes. Its geometry becomes more complex and is not easily represented as an intersection of oriented half spaces. In the cases of self-intersecting boundaries, the orientation of an edge is not well-defined, and therefore the vector $\vw_{jk}$ in the proof of \autoref{th:CV_connex} is not well-defined.} \label{fig:polygon_app_not}
\end{figure}

\begin{remark}
We recall that the number of disjoint regions to be segmented is $2^m$ (some of the regions can be empty), which are obtained from $m$ level sets. The numbers of neurons $n_1 \in \N$ corresponds to the number of polygonal edges in the polygonal approximation of a single zero-superlevel set (see \autoref{ex:triangle}).

\autoref{th:CV_connex} is a generalization of the triangle example in \autoref{sec:example}, where $m=1$ and $n_1 = 3$. The condition on $\kappa$ in \autoref{eq:kappa_gen} generalizes \autoref{eq:kappa_ptw}. Moreover, the network $\network_k$ in \autoref{eq:net_k} is a generalization of the function $\kappa + \network_c$ in \autoref{eq:nettwocp}.
\end{remark}

As a consequence of \autoref{th:CV_connex}, we naturally use one layer Heaviside neural networks $\{\network_k : k=1,\ldots,m\}$ to parametrize the level set functions $\{\levelfun_k : k=1,\ldots,m\}$ of the objects to be segmented, and define the parametrized Chan-Vese functional as follows:
\begin{definition}
		The one layer  neural network parametrized Chan-Vese functional is defined by
		\begin{align} \label{eq:paraChanVese_l}
			\mathrm{CV}^{\mathrm{LS}}_{\tt f}((c_\mult)_\mult,(\vec{a}_k,{\bf W}_k,\vec{b}_k)_{k=1}^m) \nonumber
			&= \sum_{\mult \in \{-1,1\}^m}\int_{\Omega} \left( c_\mult - \f(\vx) \right)^2\prod_{k=1}^m\sigma(\mult_k\network_k(\vx)) \ \mathrm{d}\vx \\&+ \mu \sum_{k=1}^m \mathcal{H}^1(\{\network_k=0\}) + \nu \mathcal{H}^2\left(\bigcup_{k=1}^m \{\network_k > 0\}\right).
		\end{align}
	\end{definition}
	The one layer  neural network parametrized Chan-Vese functional can be used for \emph{multi-polygonal} segmentation: Instead of minimizing $\mathrm{CV}^{\mathrm{LS}}_{\tt f}$ from \autoref{eq:CV_LS} over arbitrary level set functions to determine the segments of the function ${\tt f}$, we constrain the optimization of the functional to piecewise constant functions on polyhedral partitions of the domain.

Moreover, every piecewise constant functions on polyhedral sets $\multiphase_\network$ as defined in \autoref{eq:para} coincides pointwise with a two layer Heaviside neural network $\nettwo \in \nntwo$ as defined in \autoref{eq:t}.

\begin{lemma} \label{thrm:lsnn}
	Let $m \in \N$ be fixed. Then, every multiphase Heaviside function for polygons $\multiphase_\network$, defined in \autoref{eq:para}, can be represented as a two layer Heaviside network
	\begin{equation} \label{eq:lsnn}
		\begin{aligned}
      \vx \in \Omega \subseteq \R^2 \mapsto \multiphase_\network(\vx) =
			\sum_{\mult \in \{-1,1\}^m} c_\mult \sigma\left(\kappa + \sum_{j=1}^{n_1}\frac{1}{n_1} \sigma(\mult_j \affine_j(\vx))\right) \in \nntwo\;,
		\end{aligned}
	\end{equation}
    where $\kappa \in \R$ is a constant satisfying \autoref{eq:kappa_gen}.
\end{lemma}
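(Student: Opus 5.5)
The strategy is to turn each product $\prod_{k=1}^m \sigma(\mult_k \network_k(\vx))$ in \autoref{eq:para} into a single second-layer Heaviside neuron applied to an averaged first layer. The averaging-plus-threshold trick from the proof of \autoref{th:CV_connex} does exactly this. The formula \autoref{eq:lsnn} is written loosely: the index $j$ inside $\sigma(\mult_j\affine_j)$ has to be read as running over the first-layer neurons attached to the factors $\sigma(\mult_k\network_k)$. So I would prove it in the form in which the inner sum collects the indicator neurons of all $m$ factors, each entering with sign $\mult_k$.

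\textbf{Step 1 (each factor is one neuron).} Following \autoref{th:CV_connex}, I would assume each $\network_k$ has the normalized form \autoref{eq:net_k}, i.e.\ $\sigma(\network_k)=\chi_{\mathcal P_k}$ pointwise. Then $\sigma(\mult_k\network_k(\vx))=\chi_{\mathcal{P}_k^{\mult_k}}(\vx)$, which takes only the values $0$ and $1$ (the value $1/2$ is excluded because $\network_k$ never vanishes, by the choice of $\kappa$). Define $g_k:=\sigma(\network_k)$. This is a first-layer object, since $\network_k$ is a shallow network up to the shift absorbed as in \autoref{eq:netshift}. Note $\sigma(\mult_k\network_k)=g_k$ if $\mult_k=1$, and $=1-g_k$ if $\mult_k=-1$.

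\textbf{Step 2 (product as a thresholded average).} For $\{0,1\}$-valued quantities $h_1,\dots,h_m$, the product $\prod_k h_k$ equals $1$ iff all $h_k=1$. Hence $\prod_k h_k=\sigma\bigl(\kappa'+\frac1m\sum_k h_k\bigr)$ whenever $-1<\kappa'<-1+\frac1m$. I would absorb the affine terms $1-g_k$ into the bias, which gives $\prod_k\sigma(\mult_k\network_k)=\sigma\bigl(\kappa_\mult+\frac1m\sum_k\mult_k g_k\bigr)$ with $\kappa_\mult=\kappa'+\frac1m\#\{k:\mult_k=-1\}$. Expanding each $g_k=\sigma(\network_k)$ through its own half-plane neurons $\sigma(\affine_{jk})$ is where the two-level nesting has to be resolved, and it is the main obstacle.

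\textbf{Step 3 (collapse the nesting).} The point is that $g_k=\prod_j\sigma(\affine_{jk})$ holds off the lines $\{\affine_{jk}=0\}$, so the whole product is $\prod_{k,j}\sigma(\mult_k$-oriented terms$)$. For $\mult_k=1$ the block is a single conjunction of half-plane indicators, and the argument of \autoref{th:CV_connex} applies directly with $n_1$ replaced by the total count. The troublesome case is $\mult_k=-1$: the complement of a polygon is a union, not an intersection. I would first try to keep $g_k$ as a hidden unit and accept the architecture $\sigma(\kappa+\text{average of first-layer-type units})$. Then I would check that this matches \autoref{eq:t}, with $a_{ij}=\mult_k/(mn_1)$-type weights and $d_{ij}$ carrying $\kappa_\mult$, when the first layer is enlarged to contain the $\sigma(\affine_{jk})$ and $\network_k$ is itself expressed via the shift remark.

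Strictly, the exact identity holds up to the measure-zero set $\bigcup_{j,k}\{\affine_{jk}=0\}$, where $\sigma=1/2$ values appear. There I would either argue as in \autoref{th:CV_connex} that the margin $\frac1{2n_1}$ in \autoref{eq:kappa_gen} still forces the correct sign, or state the identity almost everywhere, consistent with \autoref{eq:sn}. Finally, summing with coefficients $c_\mult$ gives the outer layer with $c_i=c_\mult$ and $n_2=2^m$, so $\multiphase_\network\in\nntwo$.
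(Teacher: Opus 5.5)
Your proposal never reaches an element of $\nntwo$.

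\textbf{Where it breaks.} The unit from Step 2 is $\sigma\bigl(\kappa_\mult+\tfrac1m\sum_k\mult_k\,\sigma(\network_k)\bigr)$. Here $\sigma(\network_k)=\sigma\bigl(\sum_j\tfrac1{n_1}\sigma(\affine_{jk})+\kappa\bigr)$ is already a second-layer neuron, so you have stacked a third Heaviside layer. In \eqref{eq:t}, the argument of each outer $\sigma$ must be an affine combination of first-layer neurons $\sigma(\affine_j(\vx))$ with $\affine_j$ affine. The slip is in Step 1, where you call $g_k$ a ``first-layer object''. The shift remark \eqref{eq:netshift} makes the pre-activation $\network_k$ a one layer network, not $\sigma(\network_k)$, and enlarging the first layer does not remove the middle $\sigma$.

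\textbf{Step 3 does not repair it.} Weights of type $\mult_k/(mn_1)$ amount to replacing $\sigma(\network_k)$ by the unthresholded average $\tfrac1{n_1}\sum_j\sigma(\affine_{jk})$, which is a different function. For entries $\mult_k=-1$, no single unit over the edge lines works in general. Take $m=2$ and let $\mathcal P_1,\mathcal P_2$ be the open triangles with vertices $(-1,0),(-3,\pm2)$ and $(1,0),(3,\pm2)$, with $\Omega$ large and $\mult=(-1,-1)$. Order each triangle's edges as vertical, lower, upper. The six edge indicators then equal
$T_1=(1,1,1;1,0,0)$ at $(-2.5,0)$,
$T_2=(1,0,0;1,1,1)$ at $(2.5,0)$,
$F_1=(1,1,0;1,1,0)$ at $(0,10)$, and
$F_2=(1,0,1;1,0,1)$ at $(0,-10)$.
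Since $T_1+T_2=F_1+F_2$, no $\sigma(w\cdot v+d)$ can vanish at $T_1,T_2$ and equal $1$ at $F_1,F_2$. Hence $\chi_{P_{(-1,-1)}}$ is not one second-layer unit over these lines, and the union implicit in a complement cannot be absorbed into one neuron per $\mult$.

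\textbf{Missing idea.} The paper never thresholds $\sigma(\network_k)$. It writes $\multiphase_\network=\sum_\mult c_\mult\chi_{P_\mult}$ via \eqref{eq:sn} and applies the averaging argument of \autoref{th:CV_connex} directly to half-plane indicators $\sigma(\mult_j\affine_j)$. Each term is then a customized unit \eqref{eq:d} sitting on the first layer. That step presumes the region is an intersection of half-planes, which is exactly what you rightly doubt for $\mult_k=-1$.

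What makes it rigorous is to refine to the cells of the arrangement of all $N=mn_1$ lines $\{\affine_{jk}=0\}$. This also gives the right reading of \eqref{eq:lsnn}, with $\mult$ a sign pattern of the first-layer neurons.
\begin{itemize}
\item On each open cell $C_v=\Omega\cap\bigcap_l\{v_l\affine_l>0\}$, $v\in\{-1,1\}^N$, every $\sigma(\affine_l)$ is constant. Hence every $\network_k$ and $\multiphase_\network$ is constant there, say equal to $c_v$.
\item The averaging argument now applies legitimately: $\chi_{C_v}=\sigma\bigl(\kappa+\tfrac1N\sum_l\sigma(v_l\affine_l)\bigr)$ for $-1<\kappa<-1+\tfrac1{2N}$.
\item Therefore $\multiphase_\network=\sum_v c_v\,\sigma\bigl(\kappa+\tfrac1N\sum_l\sigma(v_l\affine_l)\bigr)\in\nntwo$ off the null set $\bigcup_l\{\affine_l=0\}$.
\item By \autoref{le:polygon}, this uses at most $N(N+1)/2+1$ second-layer units, not $2^m$.
\end{itemize}
Unions and complements are then handled by the linear output layer rather than by a single neuron. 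No normalization \eqref{eq:net_k} of the $\network_k$ is needed either, and that normalization only describes convex polygons anyway.
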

\begin{proof}
For every multiphase Heaviside function for polygons $\multiphase_\network$, we have the relation (see  \autoref{eq:sn}):
\begin{equation}
\multiphase_\network(\vx) =  \sum_{\mult \in \{-1,1\}^m} c_\mult \chi_{P_\mult}(\vx) \text{ for almost all } \vx \in \Omega\,,
\end{equation}
where $\{P_\mult : \mult \in \{-1,1\}^m \}$ are polygons. Therefore, to obtain \autoref{eq:lsnn}, we show that 
\begin{equation}\label{eq:1layer-segment}
    \chi_{P_\mult}(\vx) = \sigma\left(\kappa + \sum_{j=1}^{n_1} \frac{1}{n_1} \sigma(\mult_j \affine_j(\vx))\right) \in \nntwo \text{ pointwise in }\Omega.
\end{equation}

Since $\{P_\mult : \mult \in \{-1,1\}^m \}$ are polygons, we apply the same argument as in the proof of \autoref{th:CV_connex} to establish \autoref{eq:1layer-segment}: For constants $\kappa$ satisfying $-1 < \kappa < -1 + \frac{1}{2n_1}$, the expression $\sum_{j=1}^{n_1} \frac{1}{n_1} \sigma(\mult_j \affine_j(\vx)) + \kappa$ is positive if and only if $\sigma(\mult_j \affine_j(\vx)) = 1$ for all $j=1,\ldots,n_1$, yielding \autoref{eq:1layer-segment}.

\end{proof}
We note that in \autoref{eq:lsnn}, some parameters are fixed ($a_{ij}=\frac{1}{n_1}$, $d_{ij}=\kappa$). This result generalize \autoref{eq:CVfctpara} in \autoref{sec:example} where $m=1$ and $n_1 = 3$. Moreover, \autoref{eq:1layer-segment} corresponds to a customized two layer network $\nettwo_c$ as defined in \autoref{eq:d}, which, by construction, is equivalent to applying a Heaviside activation to a one layer  network output shifted by a constant (see \autoref{de:cust}).

\subsection{Regularized Heaviside networks} \label{sec:approx_}\hspace{0pt}\\
For numerical implementation, the Heaviside function in the parametrized Chan-Vese functional $\mathrm{CV}^{\mathrm{LS}}_{\tt f}$ (see \autoref{eq:para}) is approximated by a sigmoid function $\sigma_\ve$. With this approximation, we obtain the functional $\mathrm{CV}^{\mathrm{LS}}_{\tt f, \ve}$. Accordingly, if we use $\sigma_\ve$ instead of $\sigma$ in the definition of one layer  networks $\network$ in $\nnset$ (see \autoref{eq:n}) and $\nettwo$ in  $\nntwo$ (see \autoref{eq:t}), we get $\network_\ve$ in $\nnset_\ve$ and $\nettwo_\ve$ in $\nntwo_\ve$, respectively. Analogously, we obtain the smooth level set multiphase function $\multiphase_{\levelfun,\ve}$ and the smooth multiphase function for polygons $\multiphase_{\network,\ve}$ from $\multiphase_\levelfun$ (see \autoref{eq:multiphaselevelset}) and $\multiphase_{\network}$ (see \autoref{eq:para}), respectively. The approximation is justified by the following lemma:

\begin{lemma}[Properties of $\sigma_\ve$] \label{le:sigmoidprop}
	For all $\ve > 0$, the sigmoid function $\sigma_\ve$ is Lipschitz continuous, and the family $\{ \sigma_{\ve}: \ve>0\}$ approximates the Heaviside function $\sigma$ in $L^1(\R), L^2(\R)$, and pointwise in $\R$.
\end{lemma}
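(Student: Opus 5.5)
We prove Lipschitz continuity, pointwise convergence, and $L^1$ convergence separately; the $L^2$ statement then follows from the $L^1$ one.

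\emph{Lipschitz continuity.} Since $\sigma_\ve(x)=1/(1+\e^{-x/\ve})$ is smooth, we differentiate:
$$\sigma_\ve'(x)=\frac{1}{\ve}\,\sigma_\ve(x)\bigl(1-\sigma_\ve(x)\bigr).$$
Because $0<\sigma_\ve<1$, the product $\sigma_\ve(1-\sigma_\ve)$ is at most $1/4$. Hence $0<\sigma_\ve'\le \frac{1}{4\ve}$, and the mean value theorem gives Lipschitz continuity with constant $1/(4\ve)$.

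\emph{Pointwise convergence.} We split into three cases.
\begin{itemize}
\item For $x>0$, $\e^{-x/\ve}\to 0$ as $\ve\to0$, so $\sigma_\ve(x)\to 1=\sigma(x)$.
\item For $x<0$, $\e^{-x/\ve}\to\infty$, so $\sigma_\ve(x)\to0=\sigma(x)$.
\item At $x=0$, $\sigma_\ve(0)=1/2=\sigma(0)$ for every $\ve>0$.
\end{itemize}
This matches exactly the convention $\sigma(0)=\frac12$ in \autoref{eq:heavi}.

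\emph{$L^1$ convergence.} This is the only step needing a small computation. We use the symmetry $\sigma_\ve(-x)=1-\sigma_\ve(x)$, which parallels $\sigma(-x)=1-\sigma(x)$. It gives $|\sigma_\ve(x)-\sigma(x)|=|\sigma_\ve(-x)-\sigma(-x)|$, so
$$\|\sigma_\ve-\sigma\|_{L^1(\R)}=2\int_0^\infty\bigl(1-\sigma_\ve(x)\bigr)\,\mathrm{d}x=2\int_0^\infty\frac{\e^{-x/\ve}}{1+\e^{-x/\ve}}\,\mathrm{d}x.$$
Substituting $x=\ve y$ yields
$$\|\sigma_\ve-\sigma\|_{L^1(\R)}=2\ve\int_0^\infty\frac{\e^{-y}}{1+\e^{-y}}\,\mathrm{d}y=2\ve\ln 2.$$
The integral is finite because the integrand is bounded by $\e^{-y}$. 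Therefore $\|\sigma_\ve-\sigma\|_{L^1(\R)}=2\ve\ln2\to0$. Alternatively, one could apply dominated convergence with the $\ve$-independent dominating function $\e^{-|x|}$ for $\ve\le1$, but the explicit rate is cleaner.

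\emph{$L^2$ convergence.} Both $\sigma_\ve$ and $\sigma$ take values in $[0,1]$, so $|\sigma_\ve-\sigma|\le 1$. Hence
$$\|\sigma_\ve-\sigma\|_{L^2}^2\le\|\sigma_\ve-\sigma\|_{L^1}=2\ve\ln2,$$
which gives $L^2$ convergence at rate $O(\sqrt{\ve})$.

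The main obstacle is the $L^1$ estimate, since it requires integrability of the error on all of $\R$. The symmetry reduction to the half-line followed by the scaling $x=\ve y$ resolves it directly.
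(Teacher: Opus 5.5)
The paper states this lemma without any proof, treating it as a standard fact that justifies replacing $\sigma$ by $\sigma_\ve$, so there is no argument of the paper to compare against. Your proof is correct and complete.

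You read ``approximates in $L^1(\R)$'' as $\|\sigma_\ve-\sigma\|_{L^1(\R)}\to 0$. That is the right interpretation, since neither $\sigma$ nor $\sigma_\ve$ is itself integrable on $\R$. Your explicit values are all correct:
\begin{itemize}
\item the sharp Lipschitz constant $1/(4\ve)$;
\item $\|\sigma_\ve-\sigma\|_{L^1(\R)}=2\ve\ln 2$;
\item $\|\sigma_\ve-\sigma\|_{L^2(\R)}\le\sqrt{2\ve\ln 2}$.
\end{itemize}
These give quantitative rates, which is more than the lemma asks for.
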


Together, these findings establish a structural link between the Chan-Vese model and neural networks (see \autoref{fig:multiphase_parametrized}). In particular, the classical level set formulation can be expressed in a neural network approximation, facilitating numerical implementation using established architectures and optimization algorithms. The analysis also identifies that a minimal depth of two layers is sufficient for image segmentation.

\begin{figure}[H]
	\centering
	\includegraphics[width=0.9\linewidth]{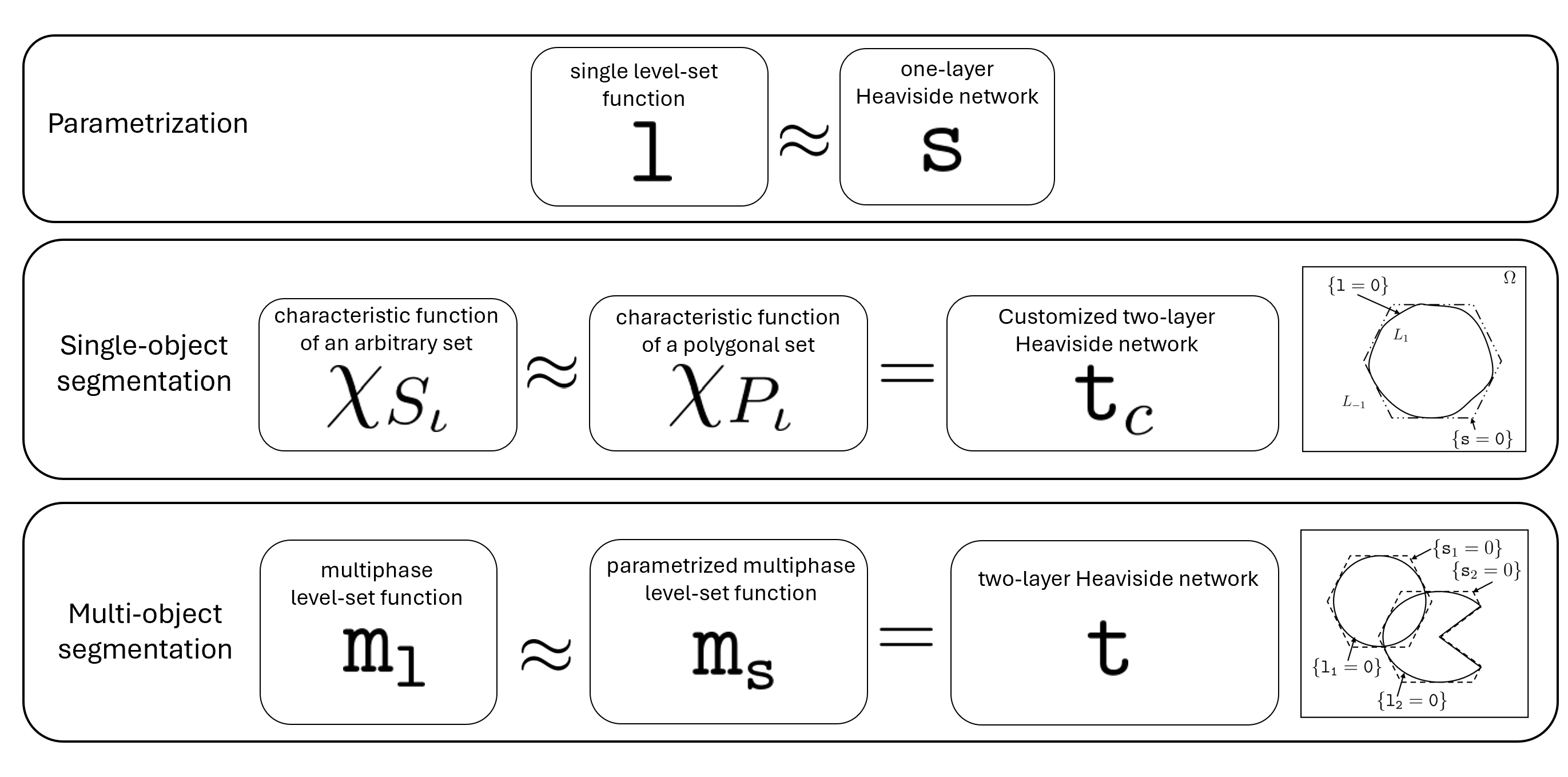}
	\caption{The Chan–Vese segmentation algorithm consists in finding the zero level set of a function that can be parametrized by a one layer Heaviside network (see \autoref{eq:para}). This is equivalent to approximating the segmented region by a polygonal set (see \autoref{th:CV_connex}). Moreover, since polygonal sets can be represented by a two layer network (see \autoref{thrm:lsnn}), we obtain an approximation result for general sets: the characteristic function of the to-be-segmented region $L_\mult$ (\autoref{eq:GammaI}) can be approximated by a customized two layer Heaviside network $\nettwo_c$ (\autoref{eq:d}), while the multiphase segmentation function $\multiphase_\levelfun$ (\autoref{eq:multiphaselevelset}) can be approximated by a two layer Heaviside network $\nettwo$ (\autoref{eq:t}).}\label{fig:multiphase_parametrized}
\end{figure}

\section{Algorithm} \label{subsec:alg}\hspace{0pt}\\
In order to minimize the one layer  network parametrized Chan–Vese functional, defined in \autoref{eq:paraChanVese_l}, which is highly non convex with respect to the parameters $(c_\mult),(\vec{a}_k,{\bf W}_k,\vec{b}_k)_{k=1}^m$ an appropriate initialization of the network parameters is required. This initialization is obtained through a learning process, which consists of two steps:
\begin{itemize}
    \item \textbf{Step 1:} Train a neural network to generate an initial network structure for parametrization $(c_\mult),(\vec{a}_k,{\bf W}_k,\vec{b}_k)_{k=1}^m$ (\autoref{alg:nn-train}). The basic learning framework is summarized in \autoref{fig:train}. The training, validation and testing procedure is summarized in \autoref{alg:nn-train}.
    \item \textbf{Step 2:} After training, the optimized network parameters initialize the evolution of the level set functions, which is driven by stochastic optimization of the network parameters to minimize the parametrized Chan–Vese functional $\mathrm{CV}^{\mathrm{LS}}_{\tt f,\ve}$ (see \autoref{alg:paraCV}).
\end{itemize}
\begin{figure}[H]
    \centering
    \includegraphics[width=0.7\linewidth]{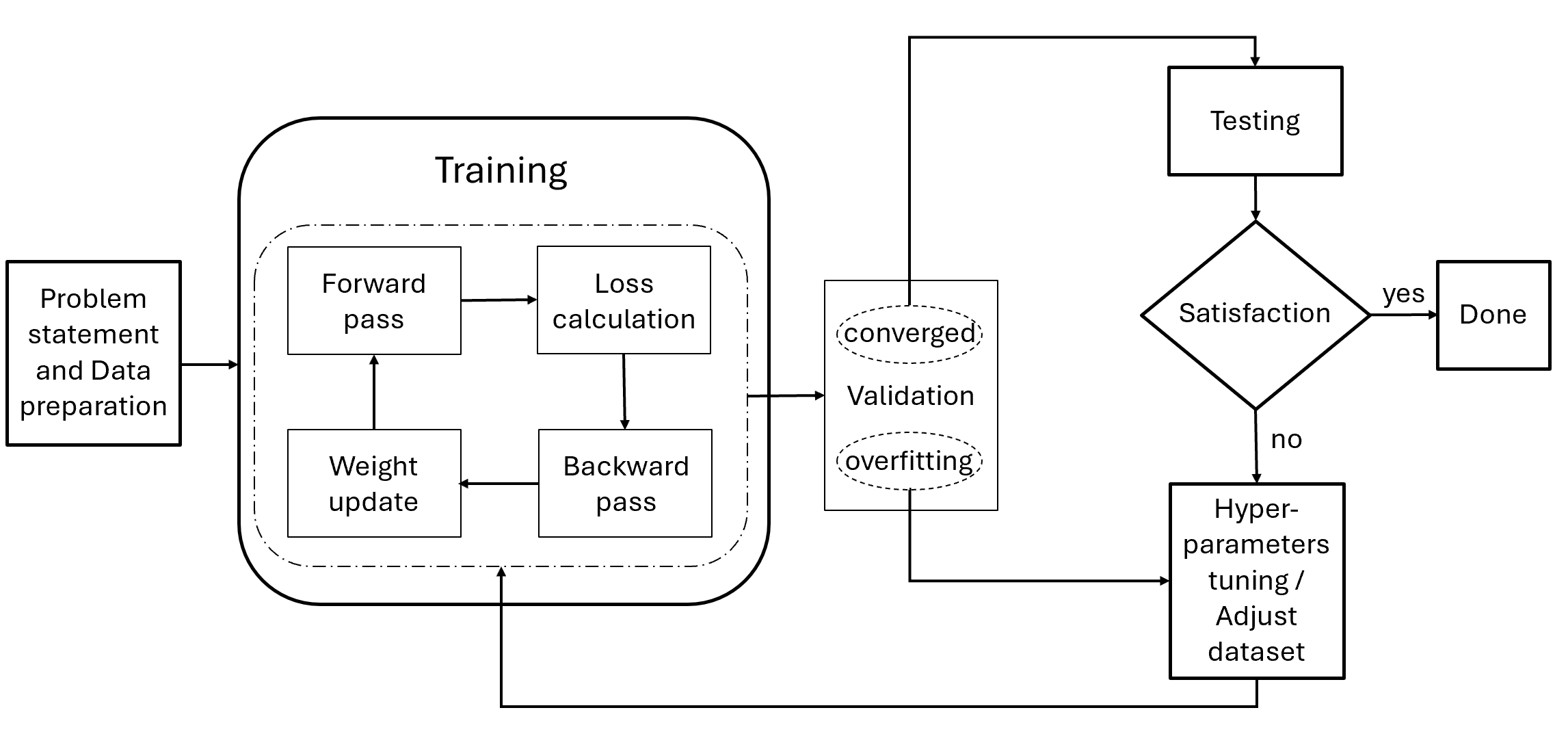}
    \caption{Flowchart of the prior learning process.}\label{fig:train}
\end{figure}

\begin{algorithm}[h!]
\caption{Training 1-Layer Neural Network for Multiphase Segmentation}
\label{alg:nn-train}
\KwIn{
    Training set $\mathcal{F} = \{{\tt f}^i_{\mathrm{input}}\}$,
    (Optional) Targeted set $\mathcal{T} = \{{\tt f}^i_{\mathrm{target}}\}$,
    Validation set $\mathcal{G} = \{{\tt g}^i\}$,
    $m$ phases, $n_1$ neurons,
    max epochs $E$, patience $P$,
    (Optional) Supervised loss $\mathcal{L}$, Optimizer
}
\KwOut{Trained parameters $\Tilde{\Theta}$}

\SetAlgoLined

\textbf{// Initialize $m$ networks} \\
\For{$k = 1$ \KwTo $m$}{
    $\vec{a}_k \in \R^{n_1}, {\bf W}_k \in \R^{n_1 \times 2}, \vec{b}_k \in \R^{n_1}$ random initialization (e.g., $\mathcal{N}(0,0.01)$) \;
    $\Theta_k \gets (\vec{a}_k, {\bf W}_k, \vec{b}_k)$ \;
}
$\Tilde{\Theta} \gets (\Theta_1, \dots, \Theta_m)$ \\

\BlankLine
\textbf{// Training Loop} \\
$best\_loss \gets \infty$, $wait \gets 0$ \\
\For{$epoch = 1$ \KwTo $E$}{
    \For{each image ${\tt f}^i_{\mathrm{input}} \in \mathcal{F}_N$}{
        \tcp{Forward}
		\For{$k=1$ \KwTo $m$}{
        Compute network outputs $\network_\ve[\Theta_k](\vx) = \sum_{j=1}^{n_1} (\vec{a}_k)_j \sigma (({\bf W}_k)_j^T\vx + (\vec{b}_k)_j)$ \\
		}
        Compute region means $c_\mult^\ve(\tilde{\Theta}) = \frac{\int_\Omega {\tt f}(\vx)\prod_{k=1}^m \sigma_\ve(\mult_k\network_\ve[\Theta_k](\vx)) \mathrm{d}\vx}{\int_\Omega \prod_{k=1}^m \sigma_\ve(\mult_k\network_\ve[\Theta_k](\vx)) \mathrm{d}\vx} \text{ for } \mult \in \{-1,1\}^m.$\\
        \tcp{Loss}
        $L \gets E_{CV}^{{\tt f}_i,\ve}(\{c_\mult^\ve(\Tilde{\Theta})\}_{ \mult \in \{-1,1\}^m},\{\network_\ve[\Theta_k]\}_{j=1}^m)$ \quad $(+ \text{ optional supervised loss } \mathcal{L}({\tt f}^i_{\mathrm{input}} - {\tt f}^i_{\mathrm{target}}))$ \\
        \tcp{Backward}
        Compute the gradient $\nabla_{\Tilde{\Theta}} L$\\
        \tcp{Update}
        $\Tilde{\Theta} \gets$ Optimizer$(\Tilde{\Theta}, \nabla_{\Tilde{\Theta}} L)$
    }

    \tcp{Validation}
    $val\_loss \gets \frac{1}{\abs{\mathcal{G}}} \sum_{{\tt g}_i \in \mathcal{G}} E_{CV}^{{\tt g}_i,\ve}(\{c_\mult^\ve(\Tilde{\Theta})\}_{ \mult \in \{-1,1\}^m},\{\network_\ve[\Theta_k]\}_{j=1}^m)$\\
    \If{$val\_loss < best\_loss$}{
        $best\_loss \gets val\_loss$, \quad $wait \gets 0$ \\
        $\Tilde{\Theta}_{\text{best}} \gets \Tilde{\Theta}$ \\
    }\Else{
        $wait \gets wait + 1$ \\
        \If{$wait \geq P$}{
            \textbf{break} \tcp{Early stopping}
        }
    }
}
\Return $\Tilde{\Theta}_{\text{best}} =: \overline{\Theta} = (\overline{\Theta}_1, \cdots, \overline{\Theta}_m)$

\BlankLine
\tcp{Testing}
\For{new image ${\tt g}$}{
    Compute network outputs $\network_\ve[\overline{\Theta}_k]$\\
    Compute region means $c_\mult^\ve(\overline{\Theta})$\\
    Compute multiphase representation using $\Tilde{\Theta}_{\text{best}}$ and region means $\multiphase = \sum_{\mult \in \{-1,1\}^m} c_\mult^\ve(\overline{\Theta}) \prod_{k=1}^m \sigma_\ve(\mult_k\network_\ve[\overline{\Theta}_k])$\
}
\end{algorithm}

\begin{algorithm}[H]
\caption{Multiphase Parametrized Chan-Vese Segmentation with Mini-Batch SGD}
\label{alg:paraCV}
\KwIn{
    Image $\tt f$, number of phases $m \geq 1$, (optional) initial parameters $\{\Theta_k^0 = (\vec{a}_k^0, {\bf W}_k^0, \vec{b}_k^0)\}_{k=1}^m$,
    regularization constant $\mu > 0$, updating rate $\eta$, max iterations $T$, batch size $B$, smoothing parameter $\ve > 0$, tolerance $\tau > 0$
}

\KwOut{Optimized parameters $\{\Theta_k^*\}_{k=1}^m$, region means $\{c_\mult^*\}_{\mult \in \{-1,1\}^m}$, multiphase Chan-Vese function $\multiphase$}

\SetAlgoLined
\SetKwFunction{UpdateMeans}{UpdateRegionMeans}
\SetKwFunction{UpdateFunction}{ParametrizedLevelsetFunction}
\SetKwFunction{RandomInit}{RandomInitialParameters}

\BlankLine
\textbf{// Initialization:}\\
\For{$k = 1$ \KwTo $m$}{
    \If{$\Theta_k^0$ is not provided}{
       $\vec{a}_k^0 \in \R^{n_1}, {\bf W}_k^0 \in \R^{n_1 \times 2}, \vec{b}_k^0 \in \R^{n_1}$ random initialization (e.g., $\mathcal{N}(0,0.01)$)\\
 $\Theta_k \gets (\vec{a}_k, {\bf W}_k, \vec{b}_k)$\; }\Else{
        $\Theta_k \gets \Theta_k^0$\;
    }
 Compute  $\network_\ve[\Theta_k](\vx) = \sum_{j=1}^{n_1} (\vec{a}_k)_j \sigma (({\bf W}_k)_j^T\vx + (\vec{b}_k)_j)$ \\
}
Calculate regions mean: $c_\mult^\ve = \frac{\int_\Omega {\tt f}(\vx)\prod_{k=1}^m \sigma_\ve(\mult_k\network_\ve[\Theta_k](\vx)) \mathrm{d}\vx}{\int_\Omega \prod_{k=1}^m \sigma_\ve(\mult_k\network_\ve[\Theta_k](\vx)) \mathrm{d}\vx} \text{ for } \mult \in \{-1,1\}^m.$

\BlankLine
\textbf{// Mini-batch SGD optimization} \\
\For{$t = 1$ \KwTo $T$}{
    $\mathcal{B} \gets$ random batch of $B$ pixels\;
    \For{$k = 1$ \KwTo $m$}{
        $\mathbf{g}_k \gets \mathbf{0}$\;

        \For{$\vx \in \mathcal{B}$}{
        Compute  $\network_\ve[\Theta_k](\vx) = \sum_{j=1}^{n_1} (\vec{a}_k)_j \sigma (({\bf W}_k)_j^T\vx + (\vec{b}_k)_j)$ \\
		 $\mathbf{g}_k \gets \mathbf{g}_k + \nabla_{\Theta_k} E^{{\tt f},\ve}_{CV}(\{c_\mult^\ve\}_{ \mult \in \{-1,1\}^m},\{\network_\ve[\Theta_k]\}_{j=1}^m)$
        }

    $\mathbf{g}_k \gets \mathbf{g}_k / B$\;
    $\Theta_k \gets \Theta_k - \eta \mathbf{g}_k$\;
    }

   Update region means $c_\mult^\ve = \frac{\int_\Omega {\tt f}(\vx)\prod_{k=1}^m \sigma_\ve(\mult_k\network_\ve[\Theta_k](\vx)) \mathrm{d}\vx}{\int_\Omega \prod_{k=1}^m \sigma_\ve(\mult_k\network_\ve[\Theta_k](\vx)) \mathrm{d}\vx} \text{ for } \mult \in \{-1,1\}^m.$\\

    \BlankLine
    \textbf{// Early stopping} \\
    \If{$\max_k \|\mathbf{g}_k\| < \tau$}{\textbf{break}}
}

\textbf{// Final segmentation} \\
$\multiphase = \sum_{\mult \in \{-1,1\}^m} c_\mult \prod_{k=1}^m \sigma_\ve(\mult_k\levelfun_k)$\;
\Return $\{\Theta_k\}$, $\{c_\mult\}$, $\multiphase$\;
\end{algorithm}

\section{Numerical results} \label{numerics}
To verify our theoretical results, we run the parametrized Chan–Vese algorithm with $m=4$ (allowing at most $2^4=16$ distinct segments) using both randomly initialized parameters and pre-trained parameters for initialization, and then compare the results.

Our algorithm (\autoref{alg:paraCV}) requires an initialization of the level set functions, which is computed from training samples. For training the initialization, we used \autoref{alg:nn-train}. The datasets for training consist of $2000$ images of size $50 \times 50$ pixels, each containing three random objects chosen from circles, triangles, squares, hearts, and rectangles, with varying sizes and positions. The dataset is split into training and validation sets using an 80–20 ratio. Each network, corresponding to a level set function, is a linear neural network consisting of a single hidden layer with $10 \times 50 \times 50$ neurons. It uses the sigmoid activation function with $\ve=1$ and is trained using the AdamW optimizer with a learning rate of $3.10^{-5}$ and a weight decay rate of $3.10^{-3}$. The model is trained for 10 epochs, where each epoch represents one full pass through the training dataset. The hyperparameters were chosen through iterative testing to best match the characteristics of our dataset and achieve stable training performance.

For parametrized Chan-Vese functional optimization (\autoref{alg:paraCV}), the regularization parameters are chosen as $\mu = 0.5, \nu = 0$. Smooth approximation of Heaviside and Dirac-delta functions are used with $\ve = 1$. The gradient of the Chan-Vese functional with respect to the parameters is calculated using automatic differentiation (autograd), which relies on the analytic gradient calculation using the chain rule. The parameters are then optimized using AdamW.

\paragraph{Visualization results}
We evaluate the performance of our proposed method qualitatively through visual inspection of the segmentation boundaries and compare the convergence behavior of the Chan–Vese functional across different conditions: using trained versus randomly initialized networks, varying $\ve$ values, and employing stochastic versus deterministic optimizers.

From \autoref{fig:result_10ite}, we observe that after 10 iterations, the object boundaries are detected in both the pre-trained and randomly initialized cases.  Additional examples are presented in \autoref{fig:result}. These examples are more complex and require 20 iterations to recover the boundaries. Moreover, we observe that the boundaries obtained with random initialization are less clean and contain more spurious pixels compared to those obtained with the pre-trained model. Some of these irregularities can be attributed to the inherent stochasticity of mini-batch optimizers. By operating on subsets of the data, these optimizers reduce memory and computational cost per iteration but introduce variability in the optimization path. This can result in isolated noisy pixels and minor differences in the final solution, even when starting from identical initializations and datasets, leading to slightly less consistent outcomes.

There is an inherent trade-off between precision and stability on one hand and speed and scalability on the other. Stochastic optimizers generally converge faster and require less memory per iteration compared to deterministic methods, which typically rely on full-gradient information. They are also less sensitive to initialization. In contrast, quasi-Newton methods, such as L-BFGS, can diverge depending on the starting point, making random initialization suboptimal. Overall, stochastic optimizers are faster, whereas L-BFGS with a good initialization tends to achieve higher precision. The choice of optimizer should be guided by the desired balance between computational efficiency and accuracy.

Furthermore, from the plots of the parametrized Chan-Vese energy functional in \autoref{fig:result_10ite}, we observe that the Chan-Vese energy value for the pre-trained initialization is lower than that of the randomized initialization, indicating that the pre-trained contours are initialized closer to the object.

We then examine the performance of the algorithm on noisy input data by adding random Gaussian noise with level $0.05$ to the original noise-free image. From \autoref{fig:result_noisy}, we observe that the algorithm remains capable of detecting the boundary of the object even in the presence of noise.

\begin{figure}[H]
    \centering
	     \includegraphics[width=0.15\linewidth]{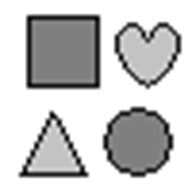}
    \includegraphics[width=0.15\linewidth]{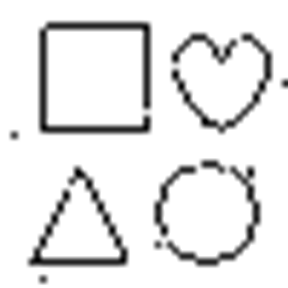}
    \includegraphics[width=0.15\linewidth]{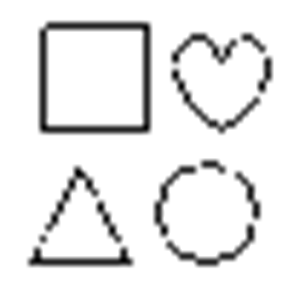}\\
 \includegraphics[width=0.5\linewidth]{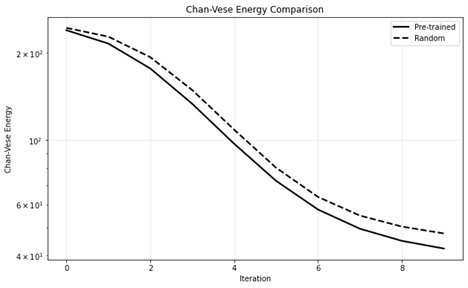}\\
        \includegraphics[width=0.15\linewidth]{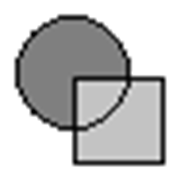}
    \includegraphics[width=0.15\linewidth]{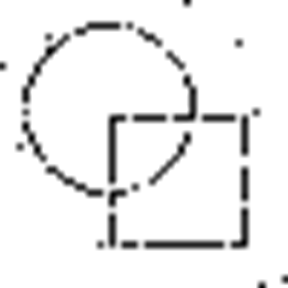}
    \includegraphics[width=0.15\linewidth]{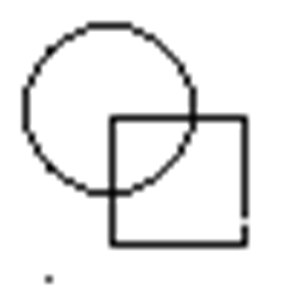}\\
    \includegraphics[width=0.5\linewidth]{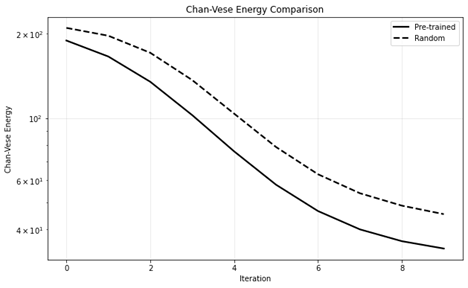}\\

    \caption{Input image $\tt f$ to be segmented (left) and segmentation results of the parametrized Chan-Vese method after 10 iterations using randomized initialization (middle) and pre-trained initialization (right). The plot shows the corresponding parametrized Chan-Vese energy functional for random initialization (dashed line) and pre-trained initialization (solid line).}
    \label{fig:result_10ite}
\end{figure}

\begin{figure}[H]
    \centering
    \includegraphics[width=0.15\linewidth]{img/numeric_new/Test2.png}
    \includegraphics[width=0.15\linewidth]{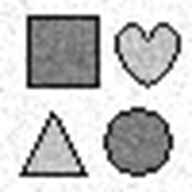}
    \includegraphics[width=0.15\linewidth]{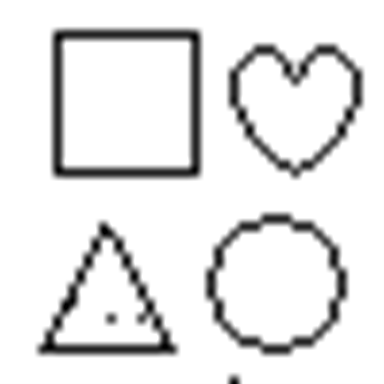}
    \includegraphics[width=0.15\linewidth]{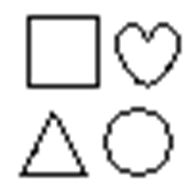}\\
\includegraphics[width=0.15\linewidth]{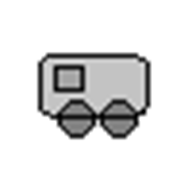}
    \includegraphics[width=0.15\linewidth]{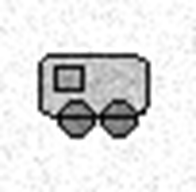}
    \includegraphics[width=0.15\linewidth]{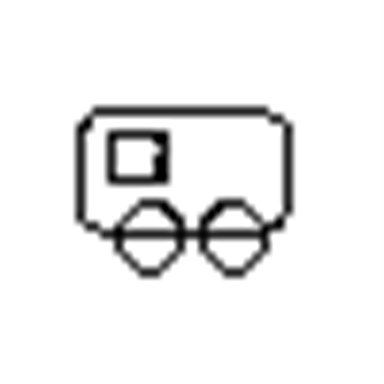}
    \includegraphics[width=0.15\linewidth]{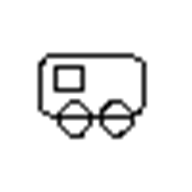}\\
    \caption{Noise-free original image (first), noisy input image $\tt f$ to be segmented with noise level $0.05$ (second) and segmentation results of the parametrized Chan-Vese method after 50 iterations using randomized initialization (third) and pre-trained initialization (fourth).}
    \label{fig:result_noisy}
\end{figure}

\begin{figure}[H]
    \centering
    \includegraphics[width=0.15\linewidth]{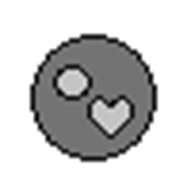}
    \includegraphics[width=0.15\linewidth]{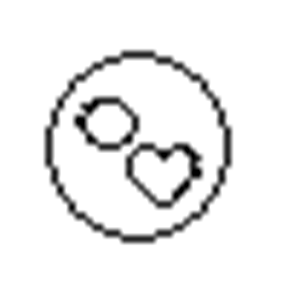}
    \includegraphics[width=0.15\linewidth]{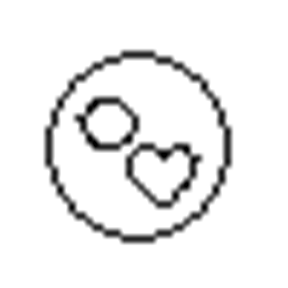}\\
    \includegraphics[width=0.15\linewidth]{img/numeric_new/Test4.png}
    \includegraphics[width=0.15\linewidth]{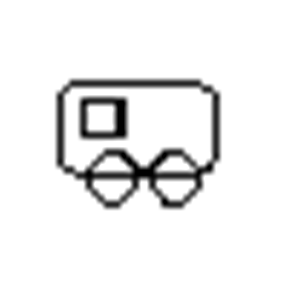}
    \includegraphics[width=0.15\linewidth]{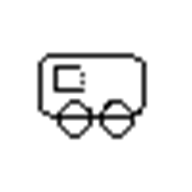}\\
    \includegraphics[width=0.15\linewidth]{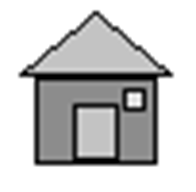}
    \includegraphics[width=0.15\linewidth]{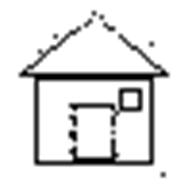}
    \includegraphics[width=0.15\linewidth]{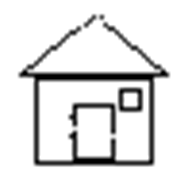}\\
  \includegraphics[width=0.15\linewidth]{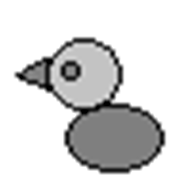}
    \includegraphics[width=0.15\linewidth]{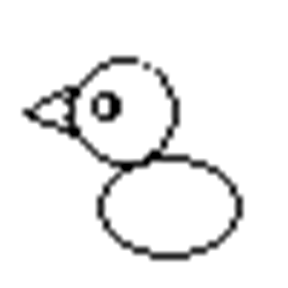}
    \includegraphics[width=0.15\linewidth]{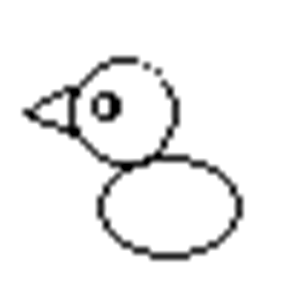}\\
    \caption{Additional results for the parametrized Chan–Vese algorithm. Input image $\tt f$ to be segmented (left) and segmentation results of the parametrized Chan-Vese method after 20 iterations using randomized initialization (middle) and pre-trained initialization (right).}
    \label{fig:result}
\end{figure}

Finally, the effect of the parameter $\mu$ in \autoref{eq:CV_LS} is illustrated in \autoref{fig:mu}. The energy functional converges similarly across different values of $\mu$ and the results are largely insensitive to the choice of $\mu$, with some small differences, such as a few spurious pixels appearing. Visual inspection indicates that $\mu = 1$ (the regularization parameter in \autoref{eq:CV_og}) produces the clearest results.
\begin{figure}[h]
    \centering
    \includegraphics[width=0.5\linewidth]{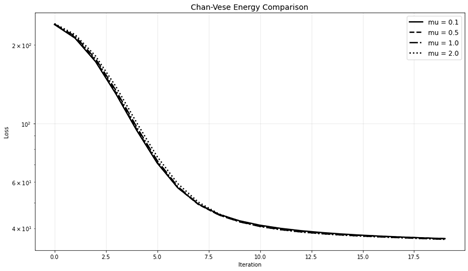}\\
    \includegraphics[width=0.15\linewidth]{img/numeric_new/Test2.png} \quad
    \includegraphics[width=0.15\linewidth]{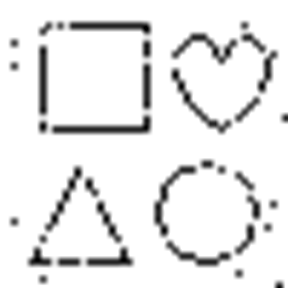}
    \includegraphics[width=0.15\linewidth]{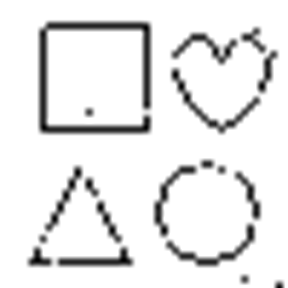}
    \includegraphics[width=0.15\linewidth]{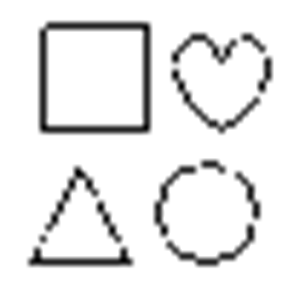}
    \includegraphics[width=0.15\linewidth]{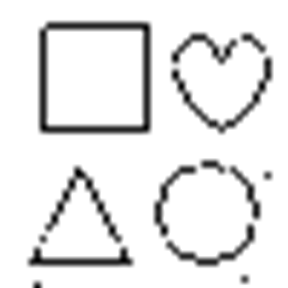}
    \caption{First row: Parametrized Chan-Vese energy functional for $\mu=0.1, 0.5, 1, 2$. Second row, from left to right: Input image and results of parametrized Chan-Vese algorithm after 20 iterations for $\mu =0.1, 0.5, 1, 2$.}
    \label{fig:mu}
\end{figure}

\section{Conclusion}
In this paper, we provide insight into network architectural design for image segmentation by showing that deep architectures with two hidden layers are particularly well suited for representing polygonal approximations of level set based segmentations. This establishes a direct link between the classical variational Chan–Vese model and neural network representations, and provides an interpretable framework for neural networks in image segmentation tasks.

Moreover, we propose a data driven approach to the Chan–Vese method for image segmentation and multiple-object segmentation, in which level set representations are parametrized by neural networks. We learn geometric structures directly from data and use them to initialize the parametrized Chan–Vese evolution. This leads to a geometry aware initialization strategy that improves convergence, particularly when test images share structural similarities with the training data.

\subsection*{Acknowledgements}
This research was funded in whole, or in part, by the Austrian Science Fund
(FWF) 10.55776/P34981 (OS) -- New Inverse Problems of Super-Resolved Microscopy (NIPSUM),
SFB 10.55776/F68 (OS) ``Tomography Across the Scales'', project F6807-N36
(Tomography with Uncertainties), 10.55776/T1160 (CS) ``Photoacoustic Tomography: Analysis and Numerics'', National Natural Science Foundation of China (Grant No. 41030040 (CS)). For open access purposes, the author has applied a CC BY public copyright license to any author-accepted manuscript version arising from this submission.
The financial support by the Austrian Federal Ministry for Digital and Economic
Affairs, the National Foundation for Research, Technology and Development and the Christian Doppler
Research Association is gratefully acknowledged.

\section*{References}
\renewcommand{\i}{\ii}
\printbibliography[heading=none]

\end{document}